\DeclareMathOperator{\tr}{tr}
\def\cA{{\mathcal A}}
\def\cD{{\mathcal D}}
\def\cI{{\mathcal I}}
\def\cJ{{\mathcal J}}
\def\cL{{\mathcal L}}
\def\cM{{\mathcal M}}
\def\cP{{\mathcal P}}
\def\cR{{\mathcal R}}
\def\\{\cr}
\def\({\left(}
\def\){\right)}
\def\fl#1{\left\lfloor#1\right\rfloor}
\def\rf#1{\left\lceil#1\right\rceil}
\newcommand{\bX}{\mathbf{X}}
\newcommand{\bY}{\mathbf{Y}}
\def\rk{\varrho}
\def \C {{\mathbb C}}
\def \F {{\mathbb F}}
\def \K {{\mathbb K}}
\def \N {{\mathbb N}}
\def \Q {{\mathbb Q}}
\def \R {{\mathbb R}}
\def \Z {{\mathbb Z}}
\newcommand{\SL}{\operatorname{SL}}
\newcommand{\Tr}{\operatorname{Tr}}
\newcommand{\rank}{\operatorname{rank}}
\newcommand{\fA}{\mathfrak{A}}
\newcommand{\fB}{\mathfrak{B}}
\newcommand{\fC}{\mathfrak{C}}
\newcommand{\fD}{\mathfrak{D}}
\newcommand{\fE}{\mathfrak{E}}
\newtheorem{thm}{Theorem}
\newtheorem{prop}[thm]{Proposition}
\newtheorem{rem}[thm]{Remark}
\newtheorem{lemma}[thm]{Lemma}
\numberwithin{equation}{section}
\numberwithin{thm}{section}
\numberwithin{table}{section}
\def\mand{\qquad\mbox{and}\qquad}
\begin{document}


\title[Matrices over finite rank multiplicative groups]{Counting matrices over finite rank multiplicative groups}

\author[A. Manning] {Aaron Manning}
\address{School of Mathematics and Statistics, University of New South Wales, Sydney NSW 2052, Australia}
\email{aaron.manning@student.unsw.edu.au}

\author[A. Ostafe] {Alina Ostafe}
\address{School of Mathematics and Statistics, University of New South Wales, Sydney NSW 2052, Australia}
\email{alina.ostafe@unsw.edu.au}

\author[I. E. Shparlinski] {Igor E. Shparlinski}
\address{School of Mathematics and Statistics, University of New South Wales, Sydney NSW 2052, Australia}
\email{igor.shparlinski@unsw.edu.au}

\begin{abstract}
Motivated by recent works on statistics of matrices over sets of number theoretic interest, we study matrices with
entries from arbitrary finite subsets $\cA$ of finite rank multiplicative groups in  fields of characteristic zero.
We obtain upper bounds, in terms of the size of $\cA$, on the number of such matrices of a given rank, with a
given determinant and with a prescribed characteristic polynomial. In particular, in the case of ranks,
our results can be viewed as a statistical version of work by Alon and Solymosi~(2003).
\end{abstract}

\subjclass[2020]{11C20, 15B36, 60B20}

\keywords{Matrices over finite rank multiplicative groups, rank, determinant, characteristic polynomial}

\maketitle
\tableofcontents

\section{Introduction}

\subsection{Motivation and set-up}

For a finite subset $\cA$ of a field $\K$, we define $\cM_{m,n}(\cA)$ to be the set of $m \times n$ matrices
with entries from $\cA$. It is also convenient to omit one of the subscripts when $m = n$, writing $\cM_n (\cA)
= \cM_{n,n} (\cA)$. Various counting questions regarding matrices in $\cM_{m,n}(\cA)$ where $\cA$ is a set of
arithmetic significance have been studied in a number of works. Here we are interested in the case when $m$ and $n$
are fixed and the size of $\cA$ grows, that is, when
\[
  A = \# \cA \to \infty.
\]
Thus this is dual to the set-up when $\cA$ is fixed, typically $\cA = \{0, 1\}$ or $\cA = \{- 1, 1\}$, and $m,
n \rightarrow \infty$, which has also received a lot of attention~\cite{BVW, CJMS, Eber, FLM, FJSS, JSS, Kop,
KwSa, NgPa, NgWo, TaVu} as well as the survey~\cite{Vu}.

The direction originates from the case of $\cA = \K = \F_q$, where $\F_q$ is the finite field of $q$
elements~\cite{BreMcK, FeFi, Ful, FuGor, Ger, Rei, Sanna, Wood}.

In characteristic zero, the most studied case is the case of integer entries, bounded (in, say, $\cL^2$ or $\cL^\infty$
norms) by some parameter $H \to \infty$. This direction originates from works of Duke, Rudnick and Sarnak~\cite{DRS},
Eskin, Mozes and Shah~\cite{EMS} and Katznelson~\cite{Kat1,Kat2}, see also~\cite{BlLu, EskKat, GNY, Shah, ShapZhe,
WeXu} for further developments of these techniques, based on geometry of numbers and homogeneous dynamics. More
recently, several different approaches to problems of arithmetic statistics for matrices have emerged~\cite{AhmShp,
ALPS, Afif1, Afif2, AKOS, BlLi, BSW, BOS, BuSh, E-BLS, HOS, MeSh, MOS, OstShp,Shp}.  These works are based on
a variety of other techniques, including some inputs from Diophantine geometry and analytic number theory. In
particular, these new ideas have given the means to approach various counting question for matrices with rational
entries whose numerators and denominators are bounded by a given height $H$, see~\cite{AKOS}, and for matrices
with entries which are polynomial values of integers from $[-H,H]$, see~\cite{BlLi,MOS}, (in the same regime of
fixed $m$ and $n$ and $H \to \infty$).

There is also an emerging direction of studying matrices with entries from a completely general set, where,
surprisingly some nontrivial bounds are possible~\cite{Arut,KPSV,Mud,ShkShp}.

More precisely, most of the above works study the following three subsets of $\cM_{m,n} (\cA)$:
\begin{itemize}
\item  matrices of given determinant $d \in \K$,
\begin{equation}\label{eq:defn:cal-d}
  \cD_{n}(\cA; d) = \{ \bX \in \cM_n (\cA) :~\det\bX = d \},
\end{equation}
\item  matrices with a given characteristic polynomial $f \in \K[T]$,
\begin{equation}\label{eq:defn:cal-p}
  \cP_n (\cA; f) = \{ \bX \in \cM_n (\cA) :~\det(T I_n - \bX) = f \}, 
\end{equation}
\item  matrices of given rank $r \in \N$,
\begin{equation}\label{eq:defn:cal-r}
  \cR_{m,n}(\cA; r) = \{ \bX \in \cM_{m,n} (\cA) :~\rank \:\bX = r \}. 
\end{equation}
\end{itemize}
As with $\cM_n (\cA)$, we also adopt the notation $ \cR_{n}(\cA; r) = \cR_{n, n} (\cA; r)$.

Here we consider the above questions in a new setting, when the set $\cA$ is an arbitrary finite subset of a
multiplicative subgroup $\Gamma$ of finite rank in a field $\K$ of characteristic zero. Besides the aforementioned
works, our motivation also comes from a result of Alon and Solymosi~\cite[Theorem~1]{AlSol}, which shows that
$n \times n$ matrices with entries from finitely generated subgroups $\Gamma$ of $\C^*$ have a rank growing with
their dimension $n$. In our notation, the result of~\cite[Section~4]{AlSol} can be formulated as $\cR_{n}(\cA; r)
= \emptyset$, provided $r < (c_1 \log n)^{c_2}$ for some positive constants $c_1$ and $c_2$, depending only on $n$
and the rank of $\Gamma$.

It is also interesting to note that both the approach of Alon and Solymosi~\cite{AlSol} and our approach are based
on the celebrated {\it Subspace Theorem\/} of Schmidt~\cite{Schm}. More precisely we use its implication for the
number of non-degenerate solutions to linear equations solved over $\Gamma$, given in the currently strongest form
by Amoroso and Viada~\cite[Theorem~6.2]{AmVi}. This in turn has been used in~\cite[Corollary~16]{BGKS} to estimate
the total number of solutions, see Section~\ref{sec:LinEq} for details.

We emphasise that our bounds on the above quantities $\cD_{n}(\cA; d)$ and $\cP_n (\cA; f)$ are uniform with respect to
$d$ and $f$, and the implied constants, also in the bounds on $\cR_{m,n}(\cA; r)$, may only depend on the dimensions
$m$ and $n$ of a matrix, and the rank $\rk$ of $\Gamma$. In Lemmas~\ref{lem:LinEq homog}, \ref{lem:LinEq inhomog},
and~\ref{lem:SystEq} on the number of solutions to linear equations, the implied constant may only depend on the
number of summands $n$ and the rank $\rk$ of $\Gamma$.

\subsection{Notation}

We recall that the notations $U = O(V)$, $U \ll V$ and $ V\gg U$ are equivalent to $|U|\leqslant c V$ for some
positive constant $c$, which, as above, may depend only on $m$, $n$ and $\rk$ throughout this work.

We also write $U \asymp V$ as a shorthand for when both $U \ll V$ and $V \ll U$ hold.

When $S$ is a finite set, we use $\#S$ to denote its cardinality.

Furthermore, throughout, this work we also use
\[
A =\# \cA
\]
to denotes the cardinality of $\cA$.

Finally, $\F_q$ denotes the finite field of $q$ elements and $I_n$ denotes the $n \times n$ identity matrix.

\subsection{Trivial upper bounds}

Before we formulate our results, we record the following trivial bounds, which we use as benchmarks to illustrate
the strength of our results.

Clearly, for any $n \ge 1$ and $\cA \subseteq \K$ of cardinality $A$,
\begin{equation}\label{eq:Triv D}
 \# \cD_{n}(\cA; d )  \ll A^{n^2-1} . 
\end{equation}
In fact, for $\cA = \K = \F_q$ the bound~\eqref{eq:Triv D} is tight.  However, recent work by Shkredov and
Shparlinski~\cite{ShkShp} shows that a better bound is possible for real matrices when $n \geq 3$, without any
further restrictions on the entries.

Also, for $f = T^n + c_{n-1} T^{N-1} + \ldots + c_0\in \K[T]$, 
\begin{equation}\label{eq:Triv P}
 \#  \cP_n (\cA; f) \ll A^{n^2-2} . 
\end{equation}
Indeed,  writing $\bX = (x_{i,j})_{i,j=1}^n$ and using $f = \det(TI_n - \bX)$ we see that $\bX$ has a fixed trace
$\Tr \bX = - c_{n-1}$, and hence we can express $x_{n,n}$ via other diagonal elements. After this the equation $
\det \bX = (-1)^n c_0$ becomes an algebraic equation in $n^2-1$ variables. This equation is nontrivial as one can
see by specialising all non-diagonal elements of $\bX$ to $0$.

Furthermore, for any $n \geq m \geq r\ge 1$ and $\cA \subseteq \K$ of finite cardinality $ A$,
\begin{equation}\label{eq:Triv R}
    \#\cR_{m,n}(\cA; r) \ll A^{nr + mr - r^2},
\end{equation}
Indeed, without loss of generality, we can assume that the top left $r \times r$ submatrix of $\bX \in \cM_{m,n}
(\cA)$ is non-singular. Then we see that after fixing $nr$ elements in the top $r$ rows of $\bX$ and, the $(m-r)
r$ remaining elements in the first $r$ columns of $\bX$, the remaining elements are uniquely defined.

\section{Main Results}

\subsection{Matrices of given rank}

Recall the definition of $\cR_{m,n} (\cA; r)$ given in~\eqref{eq:defn:cal-r} as the number of $m \times n$ matrices
over $\cA$ of rank $r$.

\begin{thm} \label{thm:rank}
  Let $\K$ be a field of characteristic zero, and $\Gamma$ a rank $\rk $ subgroup of ${\K}^{\ast}$. If $n,m \geq 2$
  with $n \geq m \geq r > 0$, for any finite subset $\cA$ of $\Gamma$ with cardinality $A$, we have
  \[
    \#\cR_{m,n}(\cA; r) \ll \begin{cases}
      A^{nr + m - r}, \hfill \quad & 2m \leq n + r, \\
      A^{nr + m - r + \fl{\frac{r - 1}{2}} (2m - n - r)}, \quad & \text{otherwise}. 
    \end{cases}
  \]
\end{thm}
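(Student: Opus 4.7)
The plan is to reduce the count of rank-$r$ matrices to the counting of solutions of linear systems in variables from $\cA$, and then invoke the Subspace Theorem-based bounds given by Lemma~\ref{lem:SystEq} together with the single-equation bound of Lemma~\ref{lem:LinEq homog}. The starting observation is that every $\bX \in \cR_{m,n}(\cA;r)$ has at least one nonsingular $r \times r$ submatrix, and since there are only $\binom{m}{r}\binom{n}{r} \ll 1$ choices for its row- and column-indices, it suffices, up to an implied constant, to count those $\bX$ whose top-left $r \times r$ block $\bX_{11}$ is nonsingular. Writing $\bX$ in $2 \times 2$ block form with $\bX_{12}$, $\bX_{21}$, $\bX_{22}$ of sizes $r \times (n-r)$, $(m-r) \times r$, $(m-r) \times (n-r)$, the rank-$r$ condition is equivalent to the single matrix identity $\bX_{22} = \bX_{21} \bX_{11}^{-1} \bX_{12}$.

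Next I fix the top $r$ rows $(\bX_{11} \mid \bX_{12}) \in \cA^{r \times n}$ freely, contributing $A^{rn}$ choices, and then bound the number of admissible rows to add beneath them. Setting $M := \bX_{11}^{-1}\bX_{12} \in \K^{r \times (n-r)}$, each admissible row is of the form $(\vec{u}, \vec{u} M) \in \cA^n$ for some $\vec{u} \in \cA^r$, so the per-row count is
\[
N \;:=\; \#\{\vec{u} \in \cA^r : \vec{u} M \in \cA^{n-r}\},
\]
and $\#\cR_{m,n}(\cA;r) \ll A^{rn} N^{m-r}$. The set counted by $N$ is exactly the set of $\cA$-points on the $r$-dimensional subspace of $\K^n$ defined by the $n - r$ linear equations $\sum_{l=1}^r M_{l,j} u_l - w_j = 0$ ($j = 1, \ldots, n - r$), each involving at most $r + 1$ terms in the rank-$\rk$ group $\Gamma$.

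Bounding $N$ splits into two regimes. In the regime $2m \leq n + r$, the $n - r$ equations are sufficiently numerous relative to $m - r$ that a careful application of Lemma~\ref{lem:SystEq} controls all non-trivial partition contributions in the Amoroso--Viada bound, yielding the sharp estimate $N \ll A$ and hence the first bound $A^{rn + m - r}$. In the complementary regime $2m > n + r$, applying Lemma~\ref{lem:LinEq homog} to a single $(r+1)$-term equation gives only $N \ll A^{\lfloor(r+1)/2\rfloor} = A^{1 + \lfloor(r-1)/2\rfloor}$; combining this with the dual column-based parameterisation (in which one fixes the first $r$ columns of $\bX$ instead of the first $r$ rows, and runs the same argument with the roles of $m$ and $n$ swapped) and taking the sharper of the resulting bounds produces the second bound $A^{rn + m - r + \lfloor(r-1)/2\rfloor(2m - n - r)}$.

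The hard part will be the sharp estimate $N \ll A$ in the first case: it requires combining all $n - r$ equations simultaneously, rather than merely applying a single-equation bound, to rule out the partitions of the $r + 1$ terms of one equation into size-$\geq 2$ subsets that would otherwise force $N$ up to a higher power of $A$. This is exactly the content encoded in Lemma~\ref{lem:SystEq}, but unpacking the right partition structure under the constraint $2m \leq n + r$ is where the main analytic work lies. The sharper exponent $\lfloor(r-1)/2\rfloor(2m - n - r)$ in the second case, as opposed to the naive $\lfloor(r-1)/2\rfloor(m - r)$ obtained from a pure per-row argument, is precisely the gain provided by the dual column-based bound under the asymmetry $n \geq m$.
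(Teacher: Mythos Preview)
Your overall reduction (to a nonsingular top-left $r\times r$ block, with the rank condition written as $\bX_{22}=\bX_{21}\bX_{11}^{-1}\bX_{12}$) matches the paper, but the per-row estimate you rely on is false. You claim $N=\#\{\vec{u}\in\cA^r:\vec{u}M\in\cA^{n-r}\}$ satisfies $N\ll A$ in one regime and $N\ll A^{\fl{(r+1)/2}}$ in the other, uniformly over the fixed top $r$ rows. First, $N$ does not depend on $m$, so the threshold $2m\le n+r$ cannot possibly enter a bound on $N$. Second, Lemma~\ref{lem:SystEq} in this paper is not a linear-system bound at all: it treats the specific pair $\sum x_i=\sum x_i^2=0$ and is used only for the characteristic-polynomial result. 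Third, and fatally, neither claimed bound on $N$ holds: if every column of $M=\bX_{11}^{-1}\bX_{12}$ is a scalar multiple of $e_1$ --- which occurs whenever each column of $\bX_{12}$ is a $\Gamma$-multiple of the first column of $\bX_{11}$ --- then all $n-r$ equations constrain only $u_1$, leaving $u_2,\dots,u_r$ free, and with $\cA=\{2,2^2,\dots,2^A\}$ one gets $N\asymp A^r$. The same obstruction kills the dual column count $N'$, so taking the minimum of the row and column bounds does not help.

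The idea you are missing is the paper's stratification by ``type'' $t\in\{1,\dots,r\}$, defined as the maximum number of nonzero entries among the coefficient vectors $\vec{\rho}(k)=(\text{row }k\text{ of }\bX_{21})\,\bX_{11}^{-1}$ over $k>r$. When $t$ is small the linear equation for each right-hand column degenerates ($t+1$ nonzero terms, $r-t$ free variables), but this is exactly compensated by the fact that each row of $\bX_{21}$ is then determined by only $t$ of its $r$ entries, so $\bX_{21}$ costs $A^{t(m-r)}$ rather than $A^{r(m-r)}$. The paper therefore fixes $\bX_{11}$ and $\bX_{21}$ first (not the top $r$ rows), obtaining
\[
\#\cR_{m,n}(\cA;r)\ll\sum_{t=1}^{r} A^{r^2}\cdot A^{t(m-r)}\cdot\bigl(A^{\fl{(t+1)/2}+r-t}\bigr)^{n-r},
\]
and the two cases of the theorem arise according to whether this exponent is maximised at $t=1$ or at $t=2\fl{(r-1)/2}+1$; the crossover is precisely $2m=n+r$. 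Fixing all $rn$ top-row entries in advance, as you do, destroys this trade-off and cannot recover the stated exponents.
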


When $2m > n + r$,  Theorem~\ref{thm:rank} gives us a saving against the trivial bound~\eqref{eq:Triv R}, of
\[
  \frac{A^{nr + mr - r^{2}}}{A^{nr + m - r + \fl{\frac{r - 1}{2}} (2m - n - r)}} = \begin{cases}
    A^{\frac{1}{2} (n - r)(r - 1)}, \quad & r\ \text{odd}, \\
    A^{\frac{1}{2} r(n - r) + (m - n)}, \quad & r\ \text{even}.
  \end{cases}
\]   

When $2m \leq n + r$, the bound of Theorem~\ref{thm:rank} is tight. For instance, take $\K = \Q$, $\Gamma = \langle
2\rangle$ and $\cA_k = \{ 2^s :~1 \leq s \leq 2k\}$, defining $A_k = \#\cA_k = 2k$. In this case we have $k^{nr}
\asymp A_{k}^{nr}$ ways of fixing the first $r$ rows with elements of the form $2^{s}$ for $1 \leq s \leq k$. We
then have $k^{m - r} \asymp A_k^{m - r}$ ways of choosing all the other rows to be $2^s$ multiplied by the first
row for each $1 \leq s \leq k$ (to guarantee elements stay within $\cA_k$). Thus the number of matrices of rank
at most $r$ satisfies
\begin{equation} \label{eq: rank LB}
  \sum_{j = 1}^r \#\cR_{m,n}(\cA_{k}; j) \asymp A_k^{nr + m - r}.
\end{equation}
Therefore
\begin{align*}
  \#\cR_{m,n}(\cA_{k}; r) 
  &= \sum_{j = 1}^r \#\cR_{m,n}(\cA_k; j) - \sum_{j = 1}^{r - 1} \#\cR_{m,n}(\cA_k; j) \\
  &\gg  \sum_{j = 1}^r \#\cR_{m,n}(\cA_k; j)  + O\( A_k^{n(r - 1) + m - (r - 1)}\) \\
  &\gg A_{k}^{nr + m - r}.
\end{align*}
 
\subsection{Matrices of given determinant}

We recall the definition of $\cD_n (\cA; d)$ given in~\eqref{eq:defn:cal-d} as the number of $n \times n$ matrices
over $\cA$ with determinant $d$.

\begin{thm} \label{thm:det}
  Let $\K$ be a field of characteristic zero, and $\Gamma$ a rank $\rk $ subgroup of $\K^\ast$. For any $d \in
  {\K}$, and finite subset $\cA$ of $\Gamma$ with cardinality $A$,
  \[
    \#\cD_{n}(\cA; d) \ll \begin{cases}
      A^{n^{2} - \left\lceil \frac{n}{2} \right\rceil}, \quad & d = 0, \\
      A^{n^{2} - \left\lceil \frac{n + 1}{2} \right\rceil}, \quad & d \neq 0.
    \end{cases}
  \]
\end{thm}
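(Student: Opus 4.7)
My plan is to split the argument into the cases $d = 0$ and $d \neq 0$.

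When $d = 0$, every $\bX \in \cD_n(\cA; 0)$ is singular, so
\[
\#\cD_n(\cA; 0) \leq \sum_{r = 1}^{n - 1} \#\cR_n(\cA; r).
\]
For each $r < n$ the condition $2m = 2n > n + r$ holds, so Theorem~\ref{thm:rank} applies in its second regime, giving the exponent $nr + n - r + \fl{(r - 1)/2}(n - r)$. A direct check shows this is increasing in $r$ and attains its maximum $n^2 - \lceil n/2 \rceil$ at $r = n - 1$ (using the identity $n - 1 - \fl{(n-2)/2} = \lceil n/2 \rceil$), which settles the bound.

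For $d \neq 0$, I would expand $\det \bX$ along the first row,
\[
\sum_{j = 1}^n (-1)^{1 + j}\, x_{1, j}\, C_{1, j} = d,
\]
where the cofactors $C_{1, j} \in \K$ are determined by rows $2, \ldots, n$. Since $\cA \subseteq \Gamma$ contains no zero, a short linear algebra argument shows that at least two cofactors must be nonzero for any rows $2, \ldots, n$ contributing to the count: if only the $k$-th cofactor were nonzero, expanding the vanishing cofactors $C_{1,j}$ ($j \neq k$) as rank deficiencies of the $(n-1) \times n$ sub-matrix would force its $k$-th column to be the zero vector, contradicting $\cA \subseteq \Gamma$. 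Lemma~\ref{lem:LinEq inhomog} then bounds the admissible first rows by $\ll A^{n - 2}$, summing to the preliminary bound $\#\cD_n(\cA; d) \ll A^{n^2 - 2}$. This already matches the target $A^{n^2 - \lceil (n + 1)/2 \rceil}$ for $n \in \{2, 3\}$.

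For $n \geq 4$ the extra saving of $\lceil (n + 1)/2 \rceil - 2$ requires a Laplace expansion along rows $1$ and $2$: fixing rows $3, \ldots, n$ yields
\[
\det \bX = \sum_{1 \leq k < l \leq n} (x_{1, k} x_{2, l} - x_{1, l} x_{2, k})\, \widetilde M_{k, l} = d,
\]
where $\widetilde M_{k, l}$ is the signed $(n - 2) \times (n - 2)$ minor from rows $3, \ldots, n$ and columns outside $\{k, l\}$, and at least one is nonzero since $d \neq 0$. Regrouping as an inhomogeneous linear equation in the products $x_{1, k} x_{2, l} \in \Gamma$ and invoking Lemma~\ref{lem:SystEq} tightens the bound on the pair $(r_1, r_2)$ for each generic choice of rows $3, \ldots, n$. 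Iterating this pairwise analysis across the pairs $(2i - 1, 2i)$, and treating the leftover row by the single-row expansion when $n$ is odd, produces the full saving. The main obstacle will be the case analysis of degenerate configurations in which many minors $\widetilde M_{k, l}$ vanish simultaneously; these correspond to low-rank strata of the $(n - 2) \times n$ sub-matrix on rows $3, \ldots, n$ and must be controlled by a secondary application of Theorem~\ref{thm:rank}, so the two theorems become intertwined throughout the $d \neq 0$ branch rather than being used sequentially.
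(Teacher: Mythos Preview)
Your treatment of $d=0$ matches the paper exactly: sum over ranks, apply Theorem~\ref{thm:rank}, and check the exponent is maximised at $r=n-1$.

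For $d\neq 0$ and $n\ge 4$ there is a genuine gap. Your single-row step is correct but thrown away too early: you take the worst case over the number $t\ge 2$ of nonzero cofactors and get $A^{n-2}$ for the first row. The paper instead splits on this $t$. If \emph{all} $n$ cofactors $\det\bX_{1,j}$ are nonzero, Lemma~\ref{lem:LinEq inhomog} applied to the full first-row expansion already gives $A^{\lfloor (n-1)/2\rfloor}$ choices for row~1, hence the target $A^{n^2-\lceil (n+1)/2\rceil}$ in one stroke. If some cofactor vanishes, say $\det\bX_{1,1}=0$, then $\bX_{1,1}\in\cD_{n-1}(\cA;0)$, and the \emph{already proved} $d=0$ case bounds the choices for rows $2,\ldots,n$ (minus column~1) by $A^{(n-1)^2-\lceil (n-1)/2\rceil}$; after fixing column~1 and applying Lemma~\ref{lem:LinEq inhomog} to the remaining row-1 entries one again lands on $A^{n^2-\lceil (n+1)/2\rceil}$. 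The point you are missing is that ``few nonzero cofactors'' is not merely bad news for row~1 but simultaneously a strong constraint on rows $2,\ldots,n$, and the $d=0$ bound is exactly the tool that cashes this in.

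Your proposed workaround does not repair this. Lemma~\ref{lem:SystEq} concerns the specific system $\sum x_i=\sum x_i^2=0$ and has nothing to do with a two-row Laplace expansion; presumably you meant Lemma~\ref{lem:LinEq inhomog}. Even so, the expansion $\sum_{k<l}(x_{1,k}x_{2,l}-x_{1,l}x_{2,k})\widetilde M_{k,l}=d$ has $n(n-1)$ monomials in only $2n$ underlying variables, so bounding solutions in the (highly dependent) products $x_{1,k}x_{2,l}$ does not translate into a saving of the required size on $(x_{1,\cdot},x_{2,\cdot})$. The ``iterating across pairs $(2i-1,2i)$'' step is also unclear: once all rows are chosen the determinant equation is a single constraint, and fixing different row-pairs does not produce independent equations to accumulate savings. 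Replace this with the dichotomy above (all cofactors nonzero vs.\ one vanishes, feeding back the $d=0$ case) and the argument goes through.
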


Clearly Theorem~\ref{thm:det} always improves the bound~\eqref{eq:Triv D} (and also the stronger bound
from~\cite{ShkShp}), except when $n=2$ and $d=0$, in which case the bound is tight.  Indeed, specialising the
lower bound~\eqref{eq: rank LB} to the case when $m = n$ and $r = n - 1$, we immediately see that
\[
  \#\cD_n (\cA; 0) \gg A^{n^2 - n + 1}.
\]
Hence for $d = 0$, Theorem~\ref{thm:det} is tight when $n = 2$ and $n = 3$.

\subsection{Matrices of given characteristic polynomial}

Recall the definition of $\cP_n(\cA; f)$ given in~\eqref{eq:defn:cal-p} as the number of $n \times n$ matrices
over $\cA$ with characteristic polynomial $f$.

We consider the case $n = 2$ first separately, due to the compatibility of the formulae for the trace and determinant
in this case, which allows for a tighter bound to be acquired than in the general case.

\begin{thm} \label{thm:poly 2-by-2}
  Let $\K$ be a field of characteristic zero, and $\Gamma$ a rank $\rk $ subgroup of $\K^\ast$. For any $d,t \in
  \K$ not both zero, and finite subset $\cA$ of $\Gamma$ with cardinality $A$,
  \[
    \#\cP_2 (\cA; T^2 - tT + d) \ll \begin{cases}
      A, \quad & dt = 0, \\
      1, \quad & dt \neq 0.
    \end{cases}
  \]
\end{thm}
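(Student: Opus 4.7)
Writing $\bX = \begin{pmatrix} a & b \\ c & e \end{pmatrix}$ with $a,b,c,e \in \cA$, the condition $\det(TI_2 - \bX) = T^2 - tT + d$ becomes the pair
\[
  a + e = t \qquad\text{and}\qquad ae - bc = d.
\]
All counting takes place in the extended group $\Gamma' = \Gamma \cdot \langle -1, t, d\rangle$ of rank $\rk + O(1)$, so the implied constants remain admissible. The overall plan is to recast both relations as $S$-unit equations and invoke the two-term bound of Beukers--Schlickewei and the three-term bound of Amoroso--Viada on numbers of non-degenerate solutions.

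\textbf{Case $dt = 0$.} Exactly one of $d, t$ vanishes. If $d = 0$ and $t \neq 0$, the non-degenerate two-term equation $a + e = t$ has only $O(1)$ solutions $(a,e) \in \cA^2$; for each such pair, the product $bc = ae$ is a fixed nonzero element of $\Gamma$, and the factorisations $(b,c) \in \cA^2$ contribute at most $A$, giving the bound $O(A)$. The symmetric subcase $t = 0$, $d \neq 0$ substitutes $e = -a$ and reduces the determinant relation to the two-term inhomogeneous equation $a^{2} + bc = -d$, handled identically.

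\textbf{Case $dt \neq 0$.} Eliminating $e = t - a$ in the determinant relation yields the three-term inhomogeneous $S$-unit equation
\[
  \frac{at}{d} + \frac{-a^{2}}{d} + \frac{-bc}{d} = 1
\]
over $\Gamma'$. Amoroso--Viada bounds the non-degenerate triples $(at/d,\,-a^{2}/d,\,-bc/d)$ by $O(1)$, and each such triple pins down $a$ (hence $e$) and the product $bc$. The degenerate solutions require a proper subsum to vanish: the subsum $at - a^{2} = 0$ forces $a = t$ and therefore $e = 0$, contradicting $e \in \Gamma \subseteq \K^{\ast}$; the other two subsums collapse the equation to either $a^{2} = -d$ with $bc = at$, or $at = d$ with $bc = -a^{2}$, each pinning $a$ to an $O(1)$ set and fixing $bc$.

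\textbf{Main obstacle.} After the reduction above, $a$, $e$ and the product $bc$ are all determined up to $O(1)$ possibilities, leaving the need to control the number of factorisations $(b, c) \in \cA^2$ with $bc$ fixed. The naive estimate of $A$ (choose $b$, recover $c$) yields only $\#\cP_{2} \ll A$, so the real work in Case $dt \neq 0$ is to extract one further factor of $A$. I would expect this saving to come from combining the two matrix equations so as to impose an additive constraint on $b$ and $c$ themselves --- for instance, promoting the system in $(a, e, b, c)$ to a four-term $S$-unit relation with no vanishing subsum --- which converts the multiplicative constraint $bc = \text{const}$ into a genuinely non-degenerate two-term $S$-unit equation. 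Carrying out this step cleanly, and checking that every degenerate branch likewise contributes only $O(1)$, is the technical crux of the proof.
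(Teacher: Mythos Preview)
For $dt = 0$ your argument is correct and coincides with the paper's: one of the two relations becomes a genuine two-term inhomogeneous unit equation, pinning the relevant pair down to $O(1)$ choices, and the residual multiplicative constraint $bc=\text{const}$ costs a factor of $A$.

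For $dt \neq 0$ you have correctly isolated the obstacle, but the saving you anticipate cannot be extracted, and in fact the asserted bound $\#\cP_2(\cA;f)\ll 1$ is false. The off-diagonal entries $b,c$ enter the characteristic polynomial \emph{only} through the product $bc$ --- they are absent from the trace and appear in the determinant solely as $bc$ --- so no combination of the two equations can manufacture an additive $S$-unit relation on $b$ and $c$ individually. Once $a$, $e$ and $bc$ are determined up to $O(1)$, every factorisation $(b,c)\in\cA^2$ of the fixed product gives a matrix in $\cP_2(\cA;f)$. Concretely, take $\Gamma=\langle 2\rangle\subseteq\Q^\ast$, $\cA_k=\{2^i:|i|\le k\}$ (so $A_k=2k+1$) and $f=T^2-3T+1$; then $\left(\begin{smallmatrix}1&2^{j}\\2^{-j}&2\end{smallmatrix}\right)\in\cP_2(\cA_k;f)$ for every $|j|\le k$, whence $\#\cP_2(\cA_k;f)\ge A_k$. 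The paper's proof stumbles at the same spot: after fixing $x_{1,1},x_{2,2}$ it invokes Lemma~\ref{lem:LinEq inhomog} to ``solve for $x_{1,2}$ and $x_{2,1}$'' in $O(1)$ ways, but the remaining relation $x_{1,2}x_{2,1}=x_{1,1}x_{2,2}-d$ is not a linear equation in $x_{1,2},x_{2,1}$ to which that lemma applies. The honest conclusion when $dt\ne 0$ is $\#\cP_2(\cA;f)\ll A$, which your reduction already delivers and which the example above shows to be sharp.
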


We do not specify a bound in Theorem~\ref{thm:poly 2-by-2} when $d = t = 0$ because in this case, the trivial bound
of $O(A^2)$ is tight. This can be seen with the following construction. Let $\cA_{k} = \{ \pm 2^s :~0 \leq s <
k \}$, with $A_k = \#\cA_k = k$.  A matrix $\bX = (x_{i,j})_{i,j = 1}^2$ is in $\cP_2 (\cA_k; T^2)$ if and only if
\[
  \det\bX = x_{1,1}x_{2,2} - x_{1,2}x_{2,1} = 0 \mand \tr \bX = x_{1,1} + x_{2,2} = 0,
\]
or equivalently,
\[
  x_{1,1}^2 = - x_{1,2}x_{2,1} \mand x_{1,1} = - x_{2,2}.
\]

By writing $x_{1,1} = 2^{a+b}$, $x_{1,2} = 2^{2a}$ and $x_{2,1} = - 2^{2b}$ with non-negative integers $a,b <
k/2$, we see that
\[
  \cP_2 (\cA_k; T^2) \gg A_k^2.
\]

We now turn our attention to the case $n \geq 3$. Our bound is based on fixing only the coefficients of $T^{n - 1}$
and $T^{n - 2}$ in the characteristic polynomial $f \in \K[T]$, as motivated by the techniques of~\cite{AKOS}. While we
do not present a matching lower bound, the strength of~\cite[Theorem~2.3]{AKOS} which uses this approach indicates
its unexpected power.

Before stating Theorem~\ref{thm:poly general} for fixed characteristic polynomial, we introduce the 
following function
\begin{equation}
  \begin{split} \label{eq:alpha_n}
    \alpha(n) = \frac{n(n - 1)}{2}
      &+ \max\Biggl\{
        \fl{\frac{n - 1}{2}} + \fl{\frac{n(n - 1)}{4}},\\
      & \qquad \qquad  \qquad \qquad\fl{\frac{n}{2}}  + \fl{\frac{n(n - 1)}{4} - \frac{1}{2}}
      \Biggr\}.
 \end{split} 
\end{equation}

\begin{thm} \label{thm:poly general}
  Let $\K$ be a field of characteristic zero, and $\Gamma$ a rank $\rk $ subgroup of $\K^\ast$. For any monic
  polynomial $f$ of degree $n \geq 3$, and $\cA \subseteq \Gamma$ of finite cardinality $A$, we have
  \[
    \#\cP_n(\cA; f) \ll A^{\alpha(n)}
  \]
  where $\alpha(n)$ is given by~\eqref{eq:alpha_n}. 
\end{thm}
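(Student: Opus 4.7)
The plan is to bound $\#\cP_n(\cA; f)$ using only the coefficients of $T^{n-1}$ and $T^{n-2}$ of $f$, in the spirit of \cite{AKOS}; these encode the trace and the sum of principal $2 \times 2$ minors of $\bX = (x_{i,j})_{i,j=1}^n$:
\begin{align*}
(\mathrm{E}_1)&\quad \sum_{i=1}^n x_{i,i} = t, \qquad t := -c_{n-1}, \\
(\mathrm{E}_2)&\quad \sum_{1 \le i < j \le n} \bigl( x_{i,i} x_{j,j} - x_{i,j} x_{j,i} \bigr) = s, \qquad s := c_{n-2}.
\end{align*}
First I would freely fix the $\binom{n}{2} = n(n-1)/2$ strictly upper triangular entries $x_{i,j}$ ($i < j$) in $\cA$, accounting for the leading $A^{n(n-1)/2}$ factor in $\alpha(n)$. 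After this, $(\mathrm{E}_2)$ becomes a linear equation in the $\binom{n}{2}$ strictly lower triangular entries $x_{j,i}$, with non-zero coefficients $x_{i,j} \in \Gamma$ and inhomogeneous term $R := \sum_{i<j} x_{i,i} x_{j,j} - s$ depending on the diagonal. I would then split the count according to whether $R$ vanishes.

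If $R \neq 0$, Lemma~\ref{lem:LinEq inhomog} applied to the equation in $\binom{n}{2}$ variables bounds the number of lower triangular entries by $O(A^{\fl{(n(n-1)-2)/4}})$, and the diagonal is constrained only by $(\mathrm{E}_1)$, which admits $O(A^{\fl{n/2}})$ tuples by Lemma~\ref{lem:LinEq homog} or Lemma~\ref{lem:LinEq inhomog}. This subcase contributes $A^{\fl{n/2} + \fl{(n(n-1)-2)/4}}$, the second argument of the maximum in \eqref{eq:alpha_n}. If $R = 0$, the lower triangular equation is homogeneous and Lemma~\ref{lem:LinEq homog} yields $O(A^{\fl{n(n-1)/4}})$ solutions. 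Here the diagonal satisfies both $(\mathrm{E}_1)$ and $\sum_{i<j} x_{i,i} x_{j,j} = s$; via Newton's identity $\bigl(\sum_i x_{i,i}\bigr)^2 = \sum_i x_{i,i}^2 + 2 \sum_{i<j} x_{i,i} x_{j,j}$, the latter is equivalent to the linear equation $\sum_{i=1}^n y_i = t^2 - 2s$ in the squares $y_i := x_{i,i}^2 \in \Gamma$. Applying Lemma~\ref{lem:LinEq inhomog} in $n$ variables and noting that each $y_i$ has at most two preimages in $\cA$ bounds the diagonal count by $O(A^{\fl{(n-1)/2}})$, giving the contribution $A^{\fl{(n-1)/2} + \fl{n(n-1)/4}}$, the first argument of the maximum.

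Adding the two subcases and multiplying by the $A^{n(n-1)/2}$ factor from the first step then gives $\#\cP_n(\cA; f) \ll A^{\alpha(n)}$. The main obstacle lies in the $R = 0$ case: in the degenerate sub-situation $t^2 = 2s$, the squared-diagonal equation becomes homogeneous, so Lemma~\ref{lem:LinEq inhomog} no longer applies directly and a bare single-equation count would weaken the diagonal bound to $O(A^{\fl{n/2}})$. To avoid this, one must combine $(\mathrm{E}_1)$ with $\sum_i x_{i,i}^2 = 0$ as a genuine system and invoke Lemma~\ref{lem:SystEq} in order to preserve the $O(A^{\fl{(n-1)/2}})$ bound, so that the two subcase contributions balance into the piecewise maximum defining $\alpha(n)$. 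Matching the parities of the floors in each summand of $\alpha(n)$ to the correct application (homogeneous versus inhomogeneous) of the linear-equation lemmas is the technical heart of the argument.
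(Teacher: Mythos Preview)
Your proposal is correct and follows essentially the same route as the paper: fix the strictly upper triangle, then split on whether the left-hand side of the lower-triangle equation vanishes (your $R=0$ is exactly the paper's condition $t_2=\sum_i x_{i,i}^2$, via Newton's identity), and in each branch bound the diagonal and lower triangle using Lemmas~\ref{lem:LinEq homog}, \ref{lem:LinEq inhomog}, and~\ref{lem:SystEq} as appropriate.

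One small point to tighten: your ``degenerate sub-situation'' is stated as $t^2=2s$, and you propose to invoke Lemma~\ref{lem:SystEq} there. But Lemma~\ref{lem:SystEq} is formulated only for the homogeneous system $\sum x_i=\sum x_i^2=0$, so it does not apply when $t\neq 0$. The paper handles this by a finer split: if $t\neq 0$ one simply applies Lemma~\ref{lem:LinEq inhomog} to $(\mathrm{E}_1)$ itself to get the $O(A^{\fl{(n-1)/2}})$ diagonal bound, and only in the doubly homogeneous case $t=0$, $t^2-2s=0$ (equivalently $t=s=0$, the paper's $(t_1,t_2)=(0,0)$) does one need Lemma~\ref{lem:SystEq}, whose bound $O(A^{\fl{2n/5}})$ is at most $O(A^{\fl{(n-1)/2}})$ for $n\ge 3$. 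With this clarification your argument matches the paper's.
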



We note that 
\[
  \lim_{n\to \infty} \alpha(n)/n^2 = 3/4.
\]

Direct calculations show that Theorem~\ref{thm:poly general} also improves~\eqref{eq:Triv P} and also the bound
\[
  \#\cP_n(\cA; f) \ll \#\cD_n(\cA; (-1)^n c_0 ) \ll 
  \begin{cases}
      A^{n^{2} - \left\lceil \frac{n}{2} \right\rceil}, \quad & f(0)= 0, \\
      A^{n^{2} - \left\lceil \frac{n + 1}{2} \right\rceil}, \quad & f(0) \neq 0,
    \end{cases}
\]
which follows from Theorem~\ref{thm:det}.

\begin{rem} We note that in the proof of Theorem~\ref{thm:poly general} we derive more precise bounds which
depend on some the properties of the coefficients of $X^{n-1}$ and $X^{n-2}$ from $f$,
see Appendix~\ref{app: bound poly general}, where these bounds are presented.
\end{rem} 

Finally, we observe that  Theorems~\ref{thm:poly 2-by-2} and~\ref{thm:poly general} imply upper bounds on the
number {\it cyclotomic\/} $\bX \in  \cM_n (\cA)$, that is, matrices with $\bX^k = I_n$ for some positive integer $k$.

\section{Linear equations in finite rank multiplicative groups} 
 \label{sec:LinEq} 

\subsection{Counting non-degenerate solutions}

We start with the best known bound in the case of arbitrarily many summands in an arbitrary field of characteristic
zero due to Amoroso and Viada~\cite[Theorem~6.2]{AmVi}, however as in~\cite{BGKS} the previous bound of 
Evertse, Schlickewei and Schmidt~\cite{EvSchSch} is also suitable for our purpose (as well as other bounds 
of this kind).

Let $\K$ be a field of characteristic zero, and let $\Pi$  be a subgroup of $(\K^\ast)^n$.  We say that a solution
to the equation
\begin{equation} \label{eq:LinEq Pi}
  a_1 x_1 + \ldots + a_n x_n = 1, \quad (x_1, \ldots, x_n) \in \Pi.
\end{equation}
is non-degenerate if
\[
  \sum_{i \in \cI} a_i x_i \neq 0
\]
for all $\cI \subseteq \{1, 2, \ldots, n\}$.

\begin{lemma} \label{lem:LinEq-nondegen}
  Let $\K$ be a field of characteristic zero, and $\Pi$ a rank $\rk $ subgroup of $(\K^\ast)^n$. For any $a_1,
  \ldots, a_n \in \K^\ast$, the number of non-degenerate solutions to~\eqref{eq:LinEq Pi} is at most
  $(8n)^{4n^4 (n + \rk + 1)}$.
\end{lemma}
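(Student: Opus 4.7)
The plan is to invoke Theorem~6.2 of Amoroso and Viada~\cite{AmVi} directly, since the hypotheses and conclusion of Lemma~\ref{lem:LinEq-nondegen} are designed to mirror it verbatim. The Amoroso--Viada theorem treats equations of the form $A_1 x_1 + \ldots + A_n x_n = 1$ with fixed coefficients $A_i \in \K^{\ast}$ and unknowns $(x_1, \ldots, x_n)$ ranging over a rank $\rk$ subgroup of $(\K^{\ast})^n$, and produces exactly the bound $(8n)^{4n^4(n + \rk + 1)}$. Crucially, the coefficients enter only through the equation, not through the group of solutions, so they do not inflate the rank; this is why the bound in the lemma matches theirs precisely, rather than carrying an extra $+1$ in the exponent that a naive reduction via the substitution $y_i = a_i x_i$ into a coset $(a_1, \ldots, a_n)\Pi$ would produce.

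First, I would align the data of Lemma~\ref{lem:LinEq-nondegen} with the theorem: take the ambient field to be $\K$, the subgroup to be $\Pi \subseteq (\K^{\ast})^n$ of rank $\rk$, the coefficient vector to be $(a_1, \ldots, a_n) \in (\K^{\ast})^n$, and the number of summands to be $n$. Second, I would verify that the non-degeneracy condition formulated just before~\eqref{eq:LinEq Pi} (namely, $\sum_{i \in \cI} a_i x_i \neq 0$ for every relevant $\cI \subseteq \{1, \ldots, n\}$) is equivalent to the standard notion used by Amoroso and Viada, which requires that no proper nonempty subsum vanish; the trivially empty sum and the full sum (equal to $1$) are automatically nonzero and so cause no discrepancy.

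Third, having matched the hypotheses, the lemma follows as an immediate instance of~\cite[Theorem~6.2]{AmVi}. There is no substantive mathematical obstacle in this plan, as all the heavy lifting is carried out by the quantitative subspace theorem machinery underlying~\cite{AmVi}; the only point requiring care is precisely the bookkeeping in the previous step, namely confirming that our non-degeneracy matches theirs and that the coefficients $a_i$ are absorbed into the equation rather than into $\Pi$. Should any incompatibility in conventions be discovered, the earlier bound of Evertse, Schlickewei and Schmidt~\cite{EvSchSch} cited by the authors yields an estimate of the same shape $C(n,\rk)$, which is sufficient for all downstream applications in the paper.
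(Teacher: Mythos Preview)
Your proposal is correct and matches the paper's approach exactly: the paper does not give a proof of this lemma but simply quotes it as~\cite[Theorem~6.2]{AmVi}, noting that the earlier bound of~\cite{EvSchSch} would also suffice. Your careful remarks on matching the non-degeneracy convention and on the coefficients not inflating the rank are accurate bookkeeping, but the paper itself omits even these details.
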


\subsection{Counting arbitrary solutions}
Since the entries of our matrices are drawn from a subgroup $\Gamma$ of $\K^\ast$, we specialise
Lemma~\ref{lem:LinEq-nondegen} to the case $\Pi = \Gamma^n$.

The following result is essentially~\cite[Corollary~16]{BGKS}. Although it is presented in~\cite{BGKS} for $\K =
\C$ with integer coefficients, it extends to arbitrary fields of characteristic zero in the natural way.

\begin{lemma} \label{lem:LinEq homog}
  Let $\K$ be a field of characteristic zero, and $\Gamma$ a rank $\rk $ subgroup of $\K^\ast$. Suppose that $\cA
  \subseteq \Gamma$ is a finite set of cardinality $A$.  For any  $a_1, \ldots, a_n \in \K^\ast$, the number of
  solutions to
  \[
    a_1 x_1 + \ldots + a_n x_n = 0,\quad x_1, \ldots, x_n \in \cA,
  \]
  is $O\(A^{\fl{n/2}} \)$. 
\end{lemma}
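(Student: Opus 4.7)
The plan is to induct on $n$, using Lemma~\ref{lem:LinEq-nondegen} to dispose of the \emph{non-degenerate} solutions (those with no proper non-empty vanishing sub-sum) and recursing on smaller equations for the \emph{degenerate} ones. The base case $n = 1$ is vacuous since $a_1 x_1 \neq 0$, matching $O(A^0) = O(1)$.

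For the inductive step with $n \geq 2$, I would count the non-degenerate solutions first. Picking an anchor index, say $i = 1$, and dividing the equation by $-a_1 x_1 \neq 0$ gives
\[
  \sum_{i = 2}^{n} \frac{-a_i}{a_1} \cdot \frac{x_i}{x_1} = 1,
\]
where the ratio vector $(x_2/x_1, \ldots, x_n/x_1)$ lies in $\Gamma^{n - 1} \subseteq (\K^\ast)^{n - 1}$, a group of rank $(n - 1)\rk$. The original non-degeneracy of $(x_1,\ldots,x_n)$ translates precisely into Amoroso--Viada non-degeneracy of this ratio vector: a proper vanishing sub-sum containing the index $1$ forces, by taking the complement, a vanishing sub-sum avoiding the index $1$, and vice versa. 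Lemma~\ref{lem:LinEq-nondegen} therefore supplies at most $O_{n, \rk}(1)$ such ratio vectors, and for each the anchor $x_1$ can take at most $A$ values in $\cA$ (the constraint that each $x_i \in \cA$ can only reduce this). The total is $O(A) \leq O(A^{\fl{n/2}})$ since $n \geq 2$.

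For the degenerate solutions, each admits some non-empty proper $I \subsetneq \{1, \ldots, n\}$ with $\sum_{i \in I} a_i x_i = 0$; automatically $\sum_{i \notin I} a_i x_i = 0$ as well. Union-bounding over the $O_n(1)$ choices of $I$ with $|I|, n - |I| \geq 2$ (the singleton cases contribute nothing since $a_i x_i \neq 0$), the variables in $I$ and in its complement satisfy independent linear equations of sizes $|I|$ and $n - |I|$, so the inductive hypothesis gives
\[
  O\(A^{\fl{|I|/2}}\) \cdot O\(A^{\fl{(n - |I|)/2}}\) \leq O\(A^{\fl{n/2}}\),
\]
using the elementary inequality $\fl{a/2} + \fl{b/2} \leq \fl{(a + b)/2}$. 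Summing over the bounded number of choices of $I$ preserves the bound and completes the induction.

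The conceptual crux is the non-degeneracy reduction: one must check that dividing by the anchor term sends a non-degenerate solution of the original homogeneous equation to a non-degenerate solution of an inhomogeneous equation in $n - 1$ variables over $\Gamma^{n - 1}$, which is precisely where both the hypothesis $\cA \subseteq \Gamma \subseteq \K^\ast$ (so every $x_i$ is invertible) and the vanishing of the full sum (so that complements of vanishing sub-sums vanish) enter. Once this equivalence is in place, the rest is a routine induction controlled by the elementary floor inequality above.
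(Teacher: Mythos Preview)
Your proof is correct. The paper itself does not supply an argument for this lemma but simply cites \cite[Corollary~16]{BGKS}, noting that the result there extends from $\K=\C$ with integer coefficients to the present setting; your induction---anchoring one variable to turn the non-degenerate homogeneous equation into an inhomogeneous one over $\Gamma^{n-1}$ and invoking Lemma~\ref{lem:LinEq-nondegen}, then splitting degenerate solutions along a vanishing sub-sum and its complement and using $\fl{a/2}+\fl{b/2}\le\fl{(a+b)/2}$---is the standard route and matches what one finds in \cite{BGKS}.
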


It is easy to see that the bound of Lemma~\ref{lem:LinEq homog} is tight, since for any choice of $\Gamma$ and
$\cA$, if $n = 2k$ then we can choose
\[
  a_{1} = \ldots = a_{k} = 1, \mand a_{k + 1} = \ldots = a_{2k} = - 1,
\]
allowing us to construct $A^{k} = A^{\fl{n/2}}$ solutions by setting $x_{i} = x_{k + i}$ for all
$i \in \left\{ 1,\ldots,k \right\}$. If $n = 2k + 1$, then we may similarly consider
\[
  a_1 = \ldots = a_{k - 1} = 1, \quad a_k = \ldots = a_{2k} = - 1, \mand a_{2k + 1} = 2,
\]
which allows us to once again construct $A^k =A^{\fl{n/2}}$ solutions by setting $x_i = x_{k + i - 1}$
for all $i \in \{1, \ldots, k - 1\}$ and $x_{2k - 1} = x_{2k} = x_{2k + 1}$.

For problems such as counting matrices of a given non-zero determinant, we also require a non-homogeneous (and a
slightly stronger) version of Lemma~\ref{lem:LinEq homog} where the right hand side of the corresponding equation
is an arbitrary $a_0 \in \K^\ast$. We derive it as an application of Lemma~\ref{lem:LinEq homog}, which we use to
handle the vanishing subsums present in degenerate solutions.

\begin{lemma} \label{lem:LinEq inhomog}
  Let $\K$ be a field of characteristic zero, and $\Gamma$ a rank $\rk $ subgroup of $\K^\ast$. Suppose that $\cA
  \subseteq \Gamma$ is a finite set of cardinality $A$.  For any  $a_0, a_1, \ldots, a_n \in \K^\ast$, the number
  of solutions to
  \[
    a_1 x_1 + \ldots + a_n x_n = a_0,\quad x_1, \ldots, x_n \in \cA,
  \]
  is $O\(A^{\fl{(n-1)/2}} \)$. 
\end{lemma}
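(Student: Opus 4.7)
The plan is to reduce to the two previously stated lemmas by decomposing each solution according to a minimal subset of indices that carries the inhomogeneity. Given any solution $(x_1, \ldots, x_n) \in \cA^n$ to $a_1 x_1 + \ldots + a_n x_n = a_0$, I would choose a subset $I \subseteq \{1, \ldots, n\}$ of smallest possible cardinality for which
\[
  \sum_{i \in I} a_i x_i = a_0.
\]
Such an $I$ always exists (the full index set works) and is non-empty since $a_0 \neq 0$.

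The key observation is that this minimality forces the partial tuple $(x_i)_{i \in I}$ to be a non-degenerate solution of the equation $\sum_{i \in I} (a_i/a_0) x_i = 1$ in the sense of Lemma~\ref{lem:LinEq-nondegen}. Indeed, if $\sum_{i \in I'} a_i x_i = 0$ for some non-empty proper subset $I' \subsetneq I$, then the complement $I \setminus I'$ would be a strictly smaller subset of $I$ satisfying $\sum_{i \in I \setminus I'} a_i x_i = a_0$, contradicting the choice of $I$. Lemma~\ref{lem:LinEq-nondegen} therefore gives $O(1)$ possibilities for $(x_i)_{i \in I}$ once $I$ is fixed.

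The complementary entries $(x_j)_{j \notin I}$ automatically satisfy the homogeneous equation $\sum_{j \notin I} a_j x_j = 0$, so Lemma~\ref{lem:LinEq homog} bounds them by $O(A^{\lfloor (n - |I|)/2 \rfloor})$ (and by $1$ when $|I| = n$). Summing over the $O(1)$ possible choices of the subset $I$, and noting that the exponent is maximised when $|I| = 1$, yields the claimed bound $O(A^{\lfloor (n-1)/2 \rfloor})$. The only non-routine step is the minimality argument above; once it is in place, the result follows from combining Lemmas~\ref{lem:LinEq-nondegen} and~\ref{lem:LinEq homog}. Note that the factor $A$ we gain in the exponent compared to Lemma~\ref{lem:LinEq homog} stems precisely from the fact that at least one variable is now absorbed into carrying the non-zero right-hand side, rather than participating in a vanishing pair.
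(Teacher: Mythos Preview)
Your proof is correct and follows essentially the same strategy as the paper: decompose each solution into a non-degenerate part carrying the right-hand side (bounded by Lemma~\ref{lem:LinEq-nondegen}) and a homogeneous part on at most $n-1$ variables (bounded by Lemma~\ref{lem:LinEq homog}). The only cosmetic difference is that you select the \emph{smallest} subset summing to $a_0$, whereas the paper selects the \emph{largest} subset summing to $0$; these are dual choices leading to the same decomposition and the same bound.
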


\begin{proof}
  Dividing all coefficients of the above equation by $a_0$ we see that it is sufficient to consider the equation
  \begin{equation} \label{eq:LinEq =1}
    a_1 x_1 + \ldots + a_n x_n = 1,\quad x_1, \ldots, x_n \in   \cA.
  \end{equation}

  Let $\fA$ denote the number of solutions to~\eqref{eq:LinEq =1}, and for each such solution
  $\boldsymbol{x} = (x_1, \ldots, x_n)$, associate a subset $\cI(\boldsymbol{x}) \subseteq \{1, \ldots, n\}$
  with the largest cardinality such that
  \[
    \sum_{i \in \cI(\boldsymbol{x})} a_i x_i = 0.
  \]

  For each $\cI \subseteq \{1, \ldots, n\}$, let $\fA_\cI$ denote the number of solutions of~\eqref{eq:LinEq =1}
  such that $\cI(\boldsymbol{x}) = \cI$. As such, there is a particular set $\cJ$ which maximises the number of
  corresponding solutions such that
  \[
         \fA
    =    \sum_{\cI \subsetneq \{1, \ldots, n\}} \fA_\cI
    \ll  \fA_\cJ.
  \]

  Considering now just solutions $\boldsymbol{y}$ for which $\cI(\boldsymbol{y}) = \cJ$ in the interest of bounding
  $\fA_\cJ$, we may split~\eqref{eq:LinEq =1} into the maximal degenerate part
  \begin{equation} \label{eq: Degen part}
    \sum_{i \in \cJ} a_i x_i = 0
  \end{equation}
  and non-degenerate part
  \begin{equation} \label{eq: Nondegen part}
    \sum_{i \in \left\{ 1,\ldots,n \right\} \setminus \cJ} a_{i}x_{i} = 1.
  \end{equation}

  Because solutions to~\eqref{eq: Nondegen part} are non-degenerate by construction, the number of solutions is
  $\fB \ll 1$ by Lemma~\ref{lem:LinEq-nondegen} with $\Pi = \Gamma^{n - \#\cJ}$.

  By construction, $\#\cJ \leq n - 1$ and hence the number of solutions $\fC$ to~\eqref{eq: Degen part} satisfies
  \[
    \fC \ll A^{\fl{(n-1)/2}} 
  \]
  by Lemma~\ref{lem:LinEq homog} (except at $n = 1$, in which case the theorem we presently prove is trivial). This
  leads to the overall bound
  \[
    \fA \ll \fA_\cJ \ll \fB\fC \ll A^{\fl{(n-1) /2}}, 
  \]
  concluding the proof.
\end{proof}

As we saw when illustrating the tightness of Lemma~\ref{lem:LinEq homog}, for the appropriate choice of $a_1,
\ldots, a_{n - 1}$ we have $A^{\fl{(n-1) /2}}$ solutions to
\[
  a_1 x_1 + \ldots + a_{n - 1} x_{n - 1} = 0.
\]
Choosing now $a_n =  1$ and $a_0 = x_n$ for some fixed $x_n \in \cA$, we see that Lemma~\ref{lem:LinEq inhomog}
is also tight.

We also require a bound on the number of solutions to a rather special system of two equations with elements
of $\Gamma$.

\begin{lemma} \label{lem:SystEq}
  Let $\K$ be a field of characteristic zero, and $\Gamma$ a rank $\rk $ subgroup of $\K^\ast$. Suppose that $\cA
  \subseteq \Gamma$ is a finite set of cardinality $A$.  The number of solutions to the system of equations
  \begin{equation} \label{eq:SystEq}
    x_1 + \ldots + x_n = x_1^2 + \ldots + x_n^2 = 0, \quad x_1, \ldots, x_n \in \cA,
  \end{equation}
  is $O\(A^{\fl{2n/5}}\)$.
\end{lemma}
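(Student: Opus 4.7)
My plan is to adapt the partition-based strategy used in the proof of Lemma~\ref{lem:LinEq inhomog}, making the second equation in~\eqref{eq:SystEq} an additional constraint on a distinguished representative from each block. Given a solution $(x_1, \ldots, x_n) \in \cA^n$, I would associate to it a partition $\{I_1, \ldots, I_k\}$ of $\{1, \ldots, n\}$ such that $\sum_{i \in I_j} x_i = 0$ holds non-degenerately on each $I_j$. Such a partition exists by iteratively splitting off a largest proper vanishing subsum (the complement then also sums to zero), and only $O(1)$ partition shapes need to be summed over. Picking a representative $i_j^* \in I_j$ and setting $y_i = x_i/x_{i_j^*}$ for $i \in I_j \setminus \{i_j^*\}$, the first equation restricted to $I_j$ becomes a non-degenerate inhomogeneous equation in $\Gamma^{|I_j|-1}$, so Lemma~\ref{lem:LinEq-nondegen} yields only $O(1)$ possibilities for the ratio tuple on each part.

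Writing $\sum_{i \in I_j} x_i^2 = f_j x_{i_j^*}^2$, with $f_j$ determined by the (now essentially fixed) ratio tuple on $I_j$, the second equation reduces to $\sum_{j=1}^k f_j x_{i_j^*}^2 = 0$, a linear relation among the representatives. Let $m = \#\{j : f_j = 0\}$. For those $m$ parts the representative $x_{i_j^*}$ is unconstrained in $\cA$, contributing a factor $O(A^m)$. For the remaining $k-m$ parts I would substitute $z_j = x_{i_j^*}^2$, which lies in a set of cardinality at most $A$, and apply Lemma~\ref{lem:LinEq homog} to $\sum_{j : f_j \ne 0} f_j z_j = 0$; since all coefficients are nonzero, this produces $O\bigl(A^{\fl{(k-m)/2}}\bigr)$ solutions. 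Each fixed partition shape therefore contributes $O\bigl(A^{m + \fl{(k-m)/2}}\bigr)$ to the overall count.

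It remains to bound the exponent uniformly. I would use three structural facts: (i) any part with $f_j = 0$ has size at least $3$, since for a pair $\{i,j\}$ with $x_i + x_j = 0$ one has $x_i^2 + x_j^2 = 2x_i^2 \ne 0$; (ii) parts with $f_j \ne 0$ have size at least $2$; and (iii) the case $k - m = 1$ is infeasible because $f_{j_0} x_{i_{j_0}^*}^2 = 0$ would force $x_{i_{j_0}^*} = 0$. Combined with $\sum_j |I_j| = n$, these give the constraint $n \ge 3m + 2(k-m)$, and a short case analysis separating $k = m$ from $k - m \ge 2$ then yields $m + \fl{(k-m)/2} \le \fl{2n/5}$, as required. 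The main obstacle is this discrete optimization, i.e.\ verifying that the exponent bound holds in every admissible configuration; the interplay between the two equations, encoded in the dichotomy $f_j = 0$ versus $f_j \ne 0$, is the essential new feature beyond the single-equation analyses of Lemmas~\ref{lem:LinEq homog} and~\ref{lem:LinEq inhomog}.
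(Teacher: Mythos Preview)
Your argument is correct and takes a genuinely different route from the paper's. The paper does not feed the two equations into one another as you do. Instead, for each value of $k$ (the number of blocks of size exactly~$2$ in the partition attached to the first equation) it records two independent upper bounds: one coming purely from the first equation, $T_1 \ll A^{\lfloor (n+k)/3 \rfloor}$, obtained by fixing one representative per block and invoking Lemma~\ref{lem:LinEq-nondegen}; and one coming essentially from the second equation alone after substituting $x_{2j}=-x_{2j-1}$ on the size-$2$ pairs, namely $T_2 \ll A^{\lfloor (n-k)/2 \rfloor}$ via Lemma~\ref{lem:LinEq homog}. Taking the minimum and then maximising over $k$ gives $\lfloor 2n/5 \rfloor$, with the balance struck near $k\approx n/5$.

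Your approach first exhausts the content of the first equation on every block (reducing to $O(1)$ ratio tuples per block), and only then imposes the second equation, which collapses to a single homogeneous relation $\sum_j f_j x_{i_j^\ast}^2=0$ in the $k$ representatives. This is strictly sharper: writing $p=k-m$, the constraint $3m+2p\le n$ together with $\lfloor p/2\rfloor \le p/2$ gives $3(m+\lfloor p/2\rfloor)\le 3m+3p/2\le 3m+2p\le n$ when $p\ge 0$, so in fact $m+\lfloor p/2\rfloor\le \lfloor n/3\rfloor$, not merely $\lfloor 2n/5\rfloor$. (The extremal configuration is $p=0$, i.e.\ every block has $f_j=0$ and hence size at least~$3$.) Thus your method proves the lemma as stated but actually delivers the stronger bound $O(A^{\lfloor n/3\rfloor})$; you may wish to record that.
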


\begin{proof}
For each partition 
\[
  \{1, \ldots, n \} = \bigsqcup_{i=1}^h \cI_i
\]
into $h\ge 1$ disjoint sets $\cI_j$, with $\# \cI_j\ge 2$, $j =1, \ldots, h$, we count solutions to
$x_1 + \ldots + x_n=0$, which form non-degenerate solutions to each of the equations
\[
\sum_{i \in \cI_j} x_i = 0, \quad j =1, \ldots, h.
\]
Fixing one term of each equation and counting solutions in the remainder using Lemma~\ref{lem:LinEq-nondegen},
there are $O(A^h)$ such solutions.

Let $k$ be the number of sets $\cI_j$ with $\# \cI_j = 2$, where, without loss of generality we can assume that
\[
\cI_j = \{2j-1, 2j\}, \quad j =1, \ldots, k.
\]
Hence $h \le k + \fl{(n-2k)/3}$ and thus there are at most 
\begin{equation} \label{eq: Bound 1}
  T_1 \ll A^{k + \fl{(n-2k)/3}} =  A^{\fl{(n+k)/3}}
\end{equation}
such solutions. 

On the other hand since we now have $x_{2j} = - x_{2j-1}$ for $j \leq k$, the equation
$x_1^2 + \ldots + x_n^2 = 0$ becomes 
\[
  2 \sum_{j=1}^k x_{2j-1}^2 + \sum_{j=2k+1}^n x_j^2 = 0, 
\]
which by Lemma~\ref{lem:LinEq homog} has at most 
\begin{equation} \label{eq: Bound 2}
  T_2 \ll A^{\fl{(n-k)/2}}
\end{equation}
solutions, after which the remaining variable $x_{2j}$, $j=1, \ldots, k$, are uniquely defined. 

Choosing, for each $k \in \{0, \ldots, \fl{n/2}\}$, one of the bounds~\eqref{eq: Bound 1} or~\eqref{eq: Bound 2},
whatever is smaller, we deduce that the number of solutions to~\eqref{eq:SystEq} is $O(A^{\kappa_n})$ where
\[
  \kappa_n = \max_{k \in \{0, \ldots, \fl{n/2}\}}  \min\left\{ \fl{(n+k)/3}, \fl{(n-k)/2}\right\}.
\]

By noticing that if $k \leq n / 5$ then $(n + k) / 3 \leq 2n / 5$, and similarly that if $k \geq n / 5$ then
$(n - k) / 2 \leq 2n / 5$, it follows that
\[
  \kappa_n \leq \frac{2n}{5},
\]
and by the integrality of $\kappa_n$ the result follows. 
\end{proof}

\begin{rem}
  It is not difficult to further show that $\kappa_n = \fl{2n/5}$ in the proof of Lemma~\ref{lem:SystEq}, with
  the maximum attained at $k = \fl{n/5}$.
\end{rem} 

%

\section{Proofs of main results} 

\subsection{Proof of Theorem~\ref{thm:rank}}

Our proof employs several ideas introduced in~\cite[Theorem~2.1]{MOS}, with the appropriate alterations made to
use Lemmas~\ref{lem:LinEq homog} and~\ref{lem:LinEq inhomog}, which are the new tools available in our setting.

As in the derivation of~\eqref{eq:Triv R}, we simplify by counting the size of the set $\cR_{m,n}^\ast(\cA;r)$
of matrices in $\cR_{m,n}(\cA;r)$ in which the top left $r \times r$ submatrix is non-singular.

For arbitrary $\bX = (x_{i,j})_{i,j = 1}^n \in \cR_{m,n}^\ast(\cA; r)$, we may write $\bX$ as the block matrix
\[
  \bX = \begin{bmatrix}
    \bX_1 & \bX_2 \\
    \bX_3 & \bX_4
  \end{bmatrix},
\]
where
\[
  \bX_1 = (x_{i,j})_{i, j = 1}^r
\]
is the $r \times r$ non-singular submatrix which exists by assumption.

There are at most \[\fA \ll A^{r^2}\] possible values for the entries of $\bX_1$.

Observe that for each integer $k \in \{r + 1, \ldots, m\}$, the $k$-th row of $\bX$ is a unique linear combination of
the first $r$ rows, given by coefficients $\rho_1 (k), \ldots, \rho_r (k) \in \K$. We say that $\bX_3$, the matrix
immediately below $\bX_1$, is of type $t \in \{1, \ldots, r\}$ if $t$ is the largest number of non-zero values
among the coefficients $\rho_1 (k),\ldots,\rho_r (k)$ taken over each $k \in \{r + 1, \ldots, m\}$. Suppose that,
in particular the $h$-th row is such that $t$ of the coefficients are non-zero, that is, $h$ corresponds with the
row which maximises the value of $t$. Without loss of generality we assume that it is the first $t$ coefficients
$\rho_1 (h), \ldots, \rho_t (h)$ which are non-zero. It is therefore possible to choose a non-singular $t \times t$
submatrix of
\[
  (x_{i,j})_{1 \leq i \leq t, 1 \leq j \leq r},
\]
which we assume, without loss of generality, to be
\[
  (x_{i,j})_{i, j = 1}^t.
\]

This means that each of the $A^t$ choices for $(x_{h,1}, \ldots, x_{h,t})$ defines fully the coefficients $(\rho_1
(h), \ldots, \rho_t (h))$ and subsequently $\left( \rho_1 (h),\ldots,\rho_r (h) \right)$ by including the zero
values. Thus the values of $x_{h,j}$ for $j \in \{t + 1, \ldots, r\}$ are also fixed, that is, the rest of the
corresponding row of $\bX_3$. Since $h$ has been chosen to maximise the value of $t$, we can apply the same bound to
each row of $\bX_3$ to deduce that for each $\bX_1$ there are
\[
  \fB_t = \prod_{j = r + 1}^m A^t \ll A^{t(m - r)}
\]
corresponding possible matrices $\bX_3$ of type $t$.

Given such an $h$ as described above, for each column indexed by $j \in \{r + 1, \ldots, n\}$ we have an equation
\begin{equation} \label{eq: linear comb}
  \rho_1 (h) x_{1,j} + \ldots + \rho_r (h) x_{r,j} = x_{h,j}
\end{equation}
determining the value of $x_{h,j}$ in terms of the value in the $j$-th column of the first $r$ rows.

Solving this equation in $(x_{1,j}, \ldots, x_{r,j}, x_{h,j})$ for each $j$ as described above fixes the upper
right $r \times (n - r)$ submatrix, along with the remainder of the $h$-th row. This means that the analogous equation
\[
  \rho_1 (i)x_{1,j} + \ldots + \rho_r (i)x_{r,j} = x_{i,j}
\]
for each $i \in \{r + 1, \ldots, m\} \setminus \{h\}$, with potentially fewer non-zero coefficients, has a
fixed left hand side, and thus $x_{i,j}$ on the right hand side is uniquely determined.

Let $\fC_t$ be the maximum number of solutions to the equation~\eqref{eq: linear comb} in $(x_{1,j}, \ldots,
x_{r,j}, x_{h,j})$ for each $j \in \{r + 1, \ldots, n\}$. Given we require $n - r$ such equations for each $j$
to count all remaining values of $\bX$, summing over all possibles types $t$, we have an overall bound of
\begin{equation} \label{eq:rank:preliminary-bound}
      \#\cR_{m,n}(\cA; r)
  \ll \#\cR_{m,n}^\ast(\cA; r)
  \ll \fA\sum_{t = 1}^{r}\fB_t \fC_t^{n - r}.
\end{equation}

Subtracting $x_{h,j}$ from both sides of~\eqref{eq: linear comb}, we have an equation of the same form as in
Lemma~\ref{lem:LinEq homog} with $t + 1$ non-zero coefficients, and so we have
\[
  \fC_t \ll A^{\fl{\frac{t + 1}{2}} + r - t},
\]
where the factor of $A^{r   -  t}  = A^{(r + 1) - (t + 1)}$ counts the number of solutions in the ``free'' variables
corresponding to the zero coefficients.

Now, computing the bound in~\eqref{eq:rank:preliminary-bound} we have
\begin{align*}
  \#\cR_{m,n}(\cA; r)
    &\ll \fA\sum_{t = 1}^t \fB_t \fC_t^{n - r} \\
    &\ll A^{r^2}\sum_{t = 1}^r A^{t(m - r)}\left( A^{\fl{\frac{t + 1}{2}} + r - t} \right)^{n - r} \\
    &\ll \sum_{t = 1}^r A^{r^2 + t(m - r) + \fl{\frac{t + 1}{2}} (n - r) + (r - t)(n - r)} \\
    &\ll \max\limits_{t \in \{1, \ldots, r\}}A^{r^2 + t(m - r) + \fl{\frac{t + 1}{2}} (n - r) + (r - t)(n - r)}.
\end{align*}

By defining
\[
  \delta(n,m,r,t) = r^2 + t(m - r) + \fl{\frac{t + 1}{2}} (n - r) + (r - t)(n - r),
\]
we may write
\begin{equation} \label{eq:rank:bound-in-delta}
    \#\cR_{m,n}(\cA; r) \ll \max\limits_{t \in \{1, \ldots, r\}}  A^{\delta(n,m,r,t)}.
\end{equation}

Simplifying, we find that
\begin{align*}
     \delta(n,m,r,t)
  &= mt + \fl{\frac{t + 1}{2}} (n - r) - nt + nr \\
  &= nr + t\left( m - \frac{n + r}{2} \right) + \begin{cases}
    \frac{n - r}{2},\quad & t\text{ odd}, \\
    0,\quad & t\text{ even}.
  \end{cases}
\end{align*}

If $2m \leq n + r$, then the maximum value of $\delta$ over $t$ corresponds to 
\begin{equation}
\label{eq: t small m}
t = 1. 
\end{equation}

If $2m > n + r$, then  $\delta(n,m,r,t)$ is strictly monotonically increasing over integers $t$ of the same
parity. Thus it suffices to check the two possibilities $t \in \{r, r - 1\}$. As such, we consider that
\begin{align*}
  \delta(n,m,r,r) & - \delta(n,m,r,r - 1) \\
   & = r\left( m - \frac{n + r}{2} \right) - (r - 1)\left( m - \frac{n + r}{2} \right) + (- 1)^{r + 1}\frac{n - r}{2} \\
   & = \left( m - \frac{n + r}{2} \right) + (- 1)^{r + 1}\frac{n - r}{2} \\
   & = \begin{cases}
    m - r, \quad & r\ \text{odd}, \\
    m - n, \quad & r\ \text{even}.
  \end{cases}
\end{align*}

In particular, for odd $r$, we have $\delta(n,m,r,r) \geq \delta(n,m,r,r - 1)$, while for even $r$, we have
$\delta(n,m,r,r) \leq \delta(n,m,r,r - 1)$. Therefore the choice of $t$ which maximises $\delta$ is given by
\[
  t =\begin{cases}
    r, \quad & \text{if $r$ is odd}, \\
   r-1,\quad &\ \text{if $r$ is even}, 
  \end{cases}
 \]
 or equivalently
\begin{equation}
\label{eq: t large m}
t = 2\fl{\frac{r - 1}{2}} + 1.
\end{equation}

Therefore, in the case when when $2m\le n+r$, with the choice of $t$ in~\eqref{eq: t small m}, 
\[
  \delta(n,m,r,t)
 = nr + \left( m - \frac{n + r}{2} \right) + \frac{n - r}{2}   = nr + m - r.
\]
while for  $2m>n+r$,  with the choice of $t$ in~\eqref{eq: t large m}, 
we have
\begin{align*}
  \delta(n,m,r,t)
    & = nr + \left( 2\fl{\frac{r - 1}{2}} + 1 \right)\left( m - \frac{n + r}{2} \right) + \frac{n - r}{2} \\
    & = nr + m - r + \fl{\frac{r - 1}{2}} (2m - n - r).
\end{align*}

Substituting these into~\eqref{eq:rank:bound-in-delta}, we conclude the proof.

\subsection{Proof of Theorem~\ref{thm:det}}

For the case when $d = 0$, we may write $\cD_n (\cA; 0)$ as the set of matrices which have rank strictly
less than $n$. Therefore,
\[
  \#\cD_n (\cA; 0) = \sum_{r = 1}^{n - 1} \#\cR_{n} (\cA; r).
\]

Applying now Theorem~\ref{thm:rank} (when $2m > n + r$, which in our case $m=n$ is equivalent to $r < n$), we deduce
\begin{equation}
  \begin{split} 
      \#\cD_n(\cA; 0)
      &\ll \sum_{r = 1}^{n - 1}A^{nr + n - r + \fl{\frac{r - 1}{2}} (n - r)}  \\
      &\ll \max\limits_{r \in \{1, \ldots, n - 1\}} A^{nr + n - r + \fl{\frac{r - 1}{2}} (n - r)}. \label{eq:sum-of-ranks}
   \end{split} 
\end{equation}

Defining
\[
  \delta(n,r) = nr + n - r + \fl{\frac{r - 1}{2}} (n - r)
\]
for the exponent in the above expression, we have
\begin{align*} 
  \delta(n,r) &= nr + n - r + \frac{r}{2}(n - r) + (r - n) \cdot \begin{cases}
      \frac{1}{2}, \quad & r\ \text{odd}, \\
      1, \quad & r\ \text{even},
    \end{cases} \\
  &= r\left( \frac{3}{2}n - \frac{r}{2} - 1 \right) + n + (r - n) \cdot \begin{cases}
    \frac{1}{2}, \quad & r\ \text{odd}, \\
    1, \quad & r\ \text{even}.
  \end{cases}
\end{align*}
Straightforward computations similar to those in the proof of Theorem~\ref{thm:rank} show that $\delta(n, r)$ is
increasing over integers $r$ of the same parity, and that as a function of $r \in \{1, \ldots, n - 1\}$,
it is maximised at $r = n - 1$. Substituting this back in~\eqref{eq:sum-of-ranks} we deduce
\begin{equation}
  \#\cD_n (\cA; 0)  \ll A^{n^{2} - \rf{\frac{n}{2}}},
     \label{eq:zero-conclusion}
\end{equation}
proving the case $d = 0$.




Suppose now that $d \neq 0$. Take a matrix $\bX \in \cD_n (\cA; d)$ and consider the Laplace expansion for the
determinant across the first row given by
\begin{equation} \label{eq:cofactor}
  \det\bX = d = \sum_{j = 1}^n (- 1)^{j + 1} x_{1,j} \det\bX_{1,j},
\end{equation}
where $\bX_{1,j}$ is the submatrix of $\bX$ obtained by removing the first row and $j$-th column.

Suppose firstly that none of the minors $\det\bX_{1,j}$ in~\eqref{eq:cofactor} are zero. In this case, we have at most
\[
  \fA \ll A^{n^2 - n}
\]
possibilities for the bottom $n - 1$ rows of $\bX$. Given that none of the minors are zero, we can count the number
of solutions to~\eqref{eq:cofactor} in the variables $x_{1,j}$ in
\[
  \fB \ll A^{\fl{\frac{n - 1}{2}}}
\]
ways by Lemma~\ref{lem:LinEq inhomog}, leading to an overall bound of
\begin{equation} \label{eq: det Xij nonvanish} 
  \fA\fB \ll A^{n^{2} - n + \fl{\frac{n - 1}{2}}}   = A^{n^2 - \rf{\frac{n + 1}{2}}}. 
\end{equation}

Now, suppose that at least one of the minors $\det\bX_{1,j}$ is zero, which now excludes the possibility $n = 2$.
We can assume, without loss of generality, that in particular, $\det\bX_{1,1} = 0$. Thus, there are at most
\[
  \fC = \#\cD_{n - 1} (\cA; 0)
  \ll A^{(n - 1)^2 - \rf{\frac{n - 1}{2}}}
  = A^{n^{2} - 2n + 1 - \rf{\frac{n - 1}{2}}}
\]
possibilities for $\bX_{1,1}$ by~\eqref{eq:zero-conclusion}.

We can then fix the elements $x_{1,1},x_{2,1},\ldots,x_{n,1}$ in \[\fD = A^{n}\] ways, leaving only the first row
less the top left entry unfixed. Under these assumptions,~\eqref{eq:cofactor} becomes
\begin{equation} \label{eq:cofactor-reduced}
  d = \sum_{j = 2}^n (- 1)^{j + 1}x_{1,j} \det\bX_{1,j}.
\end{equation}

Let $\fE_t$ be the number of solutions to~\eqref{eq:cofactor-reduced} under the assumption that exactly $t$ of
the matrix minors are non-zero. We assume, without loss of generality, that it is the first $t$ coefficients of
the variables  $x_{1,2}, \ldots, x_{1, (t + 1)}$ which are non-zero.

If $t=1$ then
\[
  d = - x_{1,2}\det\bX_{1,2},
\]
where $\det\bX_{1,2} \ne 0$ is already fixed. This defines $x_{1,2}$ uniquely, while the remaining variables can
be fixed in $A^{n - 2}$ ways leading to a bound of
\[
  \fE_1 \ll A^{n - 2}.
\]

If $t = 2$, then we have an equation
\[
  d = - x_{1,2}\det\bX_{1,2} + x_{1,3}\det\bX_{1,3}
\]
with $O(1)$ solutions by Lemma~\ref{lem:LinEq inhomog}, while the remaining elements can be fixed in $A^{n - 3}$
ways leading to a bound of
\[
  \fE_2 \ll A^{n - 3}.
\]

If $3 \leq t \leq n - 1$, which may only happen when $n \geq 4$, we can solve for the non-zero coefficients in
$A^{\fl{\frac{t - 1}{2}}}$ by Lemma~\ref{lem:LinEq inhomog} ways and the remaining coefficients in $A^{n - 1 - t}$
ways leading to \[\fE_{t} \ll A^{n - 1 - t + \fl{\frac{t - 1}{2}}}.\] We observe that for $t=1$ this also formally
coincides with the above bound on $\fE_1$.

Combining these, we have a total bound on the number of matrices in $\cD_n(\cA; d)$ which have a singular submatrix
in the Laplace expansion as
\begin{align*}
  \fC\fD\fE_t & \ll A^{n^2 - 2n + 1 - \rf{\frac{n - 1}{2}}} \cdot A^{n} \cdot \begin{cases}
    A^{n - 3}, & t = 2, \\
    A^{n - 1 - t + \fl{\frac{t - 1}{2} }}, & t=1, \ 3 \leq t \leq n - 1,
  \end{cases} \\
  &= \begin{cases}
    A^{n^2 - \rf{\frac{n + 3}{2}}}, & t = 2, \\
    A^{n^2 - \rf{\frac{n - 1}{2}} - t + \fl{\frac{t - 1}{2}}}, & t=1, \ 3 \leq t \leq n - 1.
  \end{cases}
\end{align*}

One can easily check that the expression is maximised at $t=1$. Therefore, for $1 \leq t \leq n - 1$ we have
\begin{equation} \label{eq: det Xij vanish} 
  \fC\fD\fE_{t} \ll A^{n^2 - \rf{\frac{n + 1}{2}}}.
\end{equation}

%

Hence, combining~\eqref{eq: det Xij nonvanish} and~\eqref{eq: det Xij vanish}, we have overall
\[
  \#\cD_n(\cA; d) = \fA\fB + \sum_{t = 2}^{n - 1} \fC\fD\fE_t  \ll A^{n^2 - \rf{\frac{n + 1}{2}}},
\]
concluding the proof.

\begin{rem}
We note that while the bound of Theorem~\ref{thm:det} for $d\ne 0$ is dominated by~\eqref{eq: det Xij nonvanish},
we can eliminate the other bottleneck coming from~\eqref{eq: det Xij vanish}, which corresponds to the case $t=1$,
by showing that this case is impossible. Since this general argument can be useful for other similar questions,
we present it in Appendix~\ref{app: VanishSubDet}, see Proposition~\ref{prop:zero-cofactors}.
\end{rem}

\subsection{Proofs of Theorems~\ref{thm:poly 2-by-2} and~\ref{thm:poly general}}

\subsubsection{The Case $n = 2$ (Theorem~\ref{thm:poly 2-by-2})}  For each matrix
  \[
    \bX = \begin{bmatrix}
      x_{1,1} & x_{1,2} \\
      x_{2,1} & x_{2,2}
    \end{bmatrix} \in \cP_2 (\cA; T^2 - tT + d),
  \]
  the entries are related to the coefficients of the characteristic polynomial by the equations
  \begin{equation} \label{eq:poly:2-by-2:det-formula}
    x_{1,1}x_{2,2} - x_{1,2}x_{2,1} = d = \det\bX
  \end{equation}
  and
  \begin{equation} \label{eq:poly:2-by-2:trace-formula}
    x_{1,1} + x_{2,2} = t = \tr \bX.
  \end{equation}

  Suppose firstly that $t = 0$ and $d \neq 0$. Hence by~\eqref{eq:poly:2-by-2:trace-formula} we have
  $x_{1,1} = -x_{2,2}$, which, when substituted into~\eqref{eq:poly:2-by-2:det-formula} yields
  \[
    -(x_{1,1})^{2} - x_{1,2} x_{2,1} = d.
  \]

  For each possible value of $x_{1,2}$, we have an equation in $(x_{1,1})^{2}$ and $x_{2,1}$ with $O(1)$ solutions
  by Lemma~\ref{lem:LinEq inhomog}. This induces at most two values for $x_{1,1}$ and then for each of these, a
  unique value of $x_{2,2}$ by~\eqref{eq:poly:2-by-2:trace-formula}. Hence up to a constant, the value of $x_{1,2}$
  determines the rest of the matrix, so there are only $O(A)$ such matrices.

  Now, suppose that $t \neq 0$ and $d = 0$. The equation~\eqref{eq:poly:2-by-2:trace-formula} has $O(1)$ solutions
  in $x_{1,1}$ and $x_{2,2}$ ways by Lemma~\ref{lem:LinEq inhomog}.  Assuming these values are now fixed, we
  trivially have $O(A)$ solutions to~\eqref{eq:poly:2-by-2:det-formula} in $x_{1,2}$ and $x_{2,1}$ because either
  value uniquely determines the other. Thus there exist $O(A)$ such matrices.

  Finally, suppose $t \neq 0$ and $d \neq 0$. As above, we can solve for $x_{1,1}$ and $x_{2,2}$
  in~\eqref{eq:poly:2-by-2:trace-formula} in $O(1)$ ways by Lemma~\ref{lem:LinEq inhomog}. This allows us to
  similarly solve for $x_{1,2}$ and $x_{2,1}$ in~\eqref{eq:poly:2-by-2:det-formula} in $O(1)$ ways, again by
  Lemma~\ref{lem:LinEq inhomog}.

\subsubsection{The Case $n \geq 3$ (Theorem~\ref{thm:poly general})}

We construct an upper bound on $\#\cP_n(\cA; f)$ by acquiring an upper bound on the larger set of matrices
$\bX = (x_{i,j})_{i, j = 1}^n \in \cM_n(\cA)$ for which only the coefficients $c_{n - 1}$ and $c_{n - 2}$ of the
characteristic polynomial 
\[
  f = \det(T I_n - \bX) = \sum_{k = 0}^n c_k T^k,
\]
are fixed. 

Given that $c_{n - 1}$ and $c_{n - 2}$ are given by
\begin{equation} \label{eq: coeff12:second-coeff}
  c_{n - 1} = - \tr\bX \mand 
  c_{n - 2} = \frac{1}{2}((\tr\bX)^2 - \tr\bX^2),
\end{equation}
we may instead equivalently fix $t_1 = \tr \bX$ and $t_2 = \tr \bX^2$.

Fixing $t_1$ leads to an equation
\begin{equation} \label{eq:poly:general:trace}
  t_1 = \sum_{i = 1}^n x_{i,i},
\end{equation}
while fixing $t_2$, we have
\[
  t_2 = \sum_{i = 1}^n \sum_{j = 1}^n x_{i,j}x_{j,i} = \sum_{i = 1}^n x_{i,i}^2 + 2\sum_{1 \leq i < j \leq n}x_{i,j}x_{j,i},
\]
which leads to the equation
\begin{equation} \label{eq:poly:general:trace-squared}
  \frac{1}{2}\bigg(t_2 - \sum_{i = 1}^n x_{i,i}^2\bigg) = \sum_{1 \leq i < j \leq n}x_{i,j}x_{j,i}.
\end{equation}

We first note that there are at most
\[
  \fA \ll A^{\frac{n(n - 1)}{2}}
\]
possibilities for the elements $x_{i, j}$ for $1 \leq i < j \leq n$.

We begin by first counting the set of matrices for which
\begin{equation} \label{eq:trace-squared-condition}
  t_2 = \sum_{i = 1}^n x_{i, i}^2,
\end{equation}
and within this consider two cases,
\[
(t_1, t_2)  = (0,0) \mand (t_1, t_2)  \ne (0,0) .
\] 

If $(t_1, t_2) = (0, 0)$, then the number of possible values for the main diagonal is
$A^{\fl{\frac{2n}{5}}}$
by Lemma~\ref{lem:SystEq}. If either $t_1 \neq 0$ or $t_2 \neq 0$, then we may solve~\eqref{eq:poly:general:trace}
or~\eqref{eq:trace-squared-condition} respectively for all the $x_{i, i}$ in
$A^{\fl{\frac{n - 1}{2}}}$
ways by Lemma~\ref{lem:LinEq inhomog}, where in the second case we count the solutions over the smaller set $\{
x^2 :~x \in A\} \subseteq \Gamma$ which then fixes each $x_{i, i}$ up to a constant. This means that overall the
number of possibilities for the main diagonal is given by
\[
  \fB \ll \begin{cases}
    A^{\fl{\frac{2n}{5}}},
      &\quad (t_1, t_2) = (0, 0), \\
    A^{\fl{\frac{n - 1}{2}}},
      &\quad \text{otherwise}. \\
  \end{cases}
\]

From~\eqref{eq:trace-squared-condition}, the left hand side of~\eqref{eq:poly:general:trace-squared} is zero and
hence the number of possibilities for $x_{i, j}$ with $1 \leq j < i \leq n$ is
\[
  \fC \ll
    A^{\fl{\frac{n(n - 1)}{4}}}
\]
by Lemma~\ref{lem:LinEq homog}.

This means that the set of matrices with a fixed trace and trace squared which
satisfy~\eqref{eq:trace-squared-condition} is given by
\begin{equation} \label{eq:poly-first-bound}
  \fA \fB \fC \ll \begin{cases}
    A^{\fl{\frac{2n}{5}} + \frac{n(n - 1)}{2} + \fl{\frac{n(n - 1)}{4}}},
      \quad & (t_1, t_2) = (0, 0), \\
    A^{\fl{\frac{n - 1}{2}} + \frac{n(n - 1)}{2} + \fl{\frac{n(n - 1)}{4}}},
      \quad & \text{otherwise}. \\
  \end{cases}
\end{equation}

Now we consider the complementary case to~\eqref{eq:trace-squared-condition} of matrices for which
\begin{equation} \label{eq:trace-squared-condition-negation}
  t_2 \neq \sum_{i = 1}^n x_{i, i}^2.
\end{equation}

By considering solutions to~\eqref{eq:poly:general:trace}, we can see that number of possibilities for $x_{1,1},
\ldots, x_{n,n}$ is
\[
  \fD \ll \begin{cases}
    A^{\fl{\frac{n}{2}}},\quad & t_1 = 0, \\
    A^{\fl{\frac{n - 1}{2}}},\quad & t_1 \neq 0,
  \end{cases}
\]
by Lemma~\ref{lem:LinEq homog} and Lemma~\ref{lem:LinEq inhomog} respectively.

Now, in this case, from~\eqref{eq:trace-squared-condition-negation}, the left hand side
of~\eqref{eq:poly:general:trace-squared} is non-zero and hence the number of possibilities for $x_{i, j}$ with $1
\leq j < i \leq n$ is
\[
  \fE \ll 
  A^{\fl{\frac{n(n - 1)}{4} - \frac{1}{2}}}
\]
by Lemma~\ref{lem:LinEq inhomog}.

Thus the number of matrices of a fixed trace and trace squared satisfying~\eqref{eq:trace-squared-condition-negation} is
\begin{equation} \label{eq:poly-second-bound}
  \fA \fD \fE \ll \begin{cases}
    A^{\fl{\frac{n}{2}} + \frac{n(n - 1)}{2} + \fl{\frac{n(n - 1)}{4} - \frac{1}{2}}},
      \quad & t_1 = 0, \\
    A^{\fl{\frac{n - 1}{2}} + \frac{n(n - 1)}{2} + \fl{\frac{n(n - 1)}{4} - \frac{1}{2}}},
      \quad & t_1 \neq 0. \\
  \end{cases}
\end{equation}

Clearly, the second bound in~\eqref{eq:poly-second-bound} is always dominated by the first bound
in~\eqref{eq:poly-second-bound}. One also checks that for $n\ge 3$ we have $\fl{2n/5}  \le\fl{(n-1)/2}$.
Hence, the first bound in~\eqref{eq:poly-first-bound} is always dominated by the second one.

Therefore, the bounds~\eqref{eq:poly-first-bound} and~\eqref{eq:poly-second-bound} imply 
\[
  \cP_n (\cA; f) \ll   A^{\alpha(n)},
\]
where  $\alpha(n)$ is given by~\eqref{eq:alpha_n}. 

\section{Further questions}

There are various possible generalisations and extensions of the problems we consider here.  Firstly, it is quite
natural to consider special types of matrices such as those with symmetry constraints including symmetric, skew
symmetric, or Hermitian matrices, as considered in~\cite{Ful, EskKat, DRS}. We expect these questions require new
ideas. For instance, the method of fixing the trace and trace of the square as in Theorem~\ref{thm:poly general}
lends itself particularly poorly to counting symmetric matrices.

Motivated by recent work on counting commuting pairs of matrices~\cite{BSW, Mud}, one can also ask about an upper
bound on the number of commuting pairs $\bX \bY = \bY \bX $ with $\bX, \bY\in  \cM_n (\cA)$. Once can also ask
about multiplicative dependencies in $s$-tuples of matrices from $ \cM_n (\cA)$, similarly to questions studied
in~\cite{BOS,HOS}. For example, Theorem~\ref{thm:det} combined with Theorems~\ref{thm:poly 2-by-2}
and~\ref{thm:poly general} enables us to apply some ideas from~\cite{HOS} for such questions (at least for sets
$\cA\subseteq \Z$).

Since the work of Blomer and Li~\cite{BlLi} is a part of our motivation, it is  natural to investigate the same
type of applications as in~\cite{BlLi} and thus study the statistics of gaps between values of linear forms in
elements of finite rank multiplicative subgroups of $\R^*$. One can also  generalise the results here to  matrices
with polynomials entries, evaluated on elements of $\cA \subseteq \Gamma$, similarly to~\cite{BlLi, MOS}.

Finally, one can ask about similar questions over fields of positive characteristic, for example for subsets of
finitely generated multiplicative group in the field of rational functions over a finite field.


\section*{Acknowledgement}

This work was supported, in part, by the Australian Research Council Grants DP230100530 and DP230100534. A.O. also
gratefully acknowledges the hospitality and support of the Max Planck Institute for Mathematics and Institut des
Hautes \'Etudes Scientifiques, where parts of her work have been carried out.

\appendix

\section{Vanishing minors in Laplace expansion}
\label{app: VanishSubDet} 

\begin{prop} \label{prop:zero-cofactors}
  Let $\K$ be a field, and suppose $n \geq 2$. For any non-singular matrix $\bX \in \cM_n (\K)$ with non-zero
  entries, the Laplace expansion about any row or column has at most $n - 2$ zero minors.
\end{prop}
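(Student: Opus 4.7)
The plan is to reduce to expansion along the first row (by symmetry of the hypotheses under transposition and row/column permutations) and to recast the vanishing of minors as a linear dependence statement for the columns of the $(n-1) \times n$ matrix $\bY$ obtained from $\bX$ by deleting its first row. Writing the columns of $\bY$ as $\vv_1, \ldots, \vv_n \in \K^{n-1}$, the minor $\det \bX_{1,j}$ vanishes if and only if the $n-1$ vectors $\{\vv_i : i \ne j\}$ are linearly dependent. Since $\det \bX \ne 0$, $\bY$ has full row rank $n-1$, so at least one of the $\det \bX_{1,j}$ is nonzero.

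I will then argue by contradiction: suppose $n-1$ of the minors vanish, and, after renumbering columns, assume $\det \bX_{1,1} \ne 0$ and $\det \bX_{1,j} = 0$ for $j = 2, \ldots, n$. Then $\vv_2, \ldots, \vv_n$ are linearly independent, hence form a basis of $\K^{n-1}$, so there are unique scalars $c_2, \ldots, c_n$ with
\[
\vv_1 = \sum_{k=2}^n c_k \vv_k.
\]
The main computational step is to show that each $c_j$ with $j \ge 2$ must vanish. For fixed $j \ge 2$, a non-trivial linear dependence among $\{\vv_i : i \ne j\}$ yields scalars $\alpha_1, \ldots, \alpha_n$ (with $\alpha_j$ omitted), not all zero, such that $\alpha_1 \vv_1 + \sum_{i \ne 1, j} \alpha_i \vv_i = 0$. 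Substituting the expression for $\vv_1$ and using linear independence of $\vv_2, \ldots, \vv_n$ gives $\alpha_1 c_j = 0$ together with $\alpha_i = -\alpha_1 c_i$ for $i \ne 1, j$. If $\alpha_1 = 0$ then all $\alpha_i$ vanish, a contradiction; hence $c_j = 0$.

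Running this for every $j = 2, \ldots, n$ forces $\vv_1 = 0$. But $\vv_1 = (x_{2,1}, x_{3,1}, \ldots, x_{n,1})^{\top}$, and by hypothesis every entry of $\bX$ is nonzero, so $\vv_1 \ne 0$. This contradiction shows that at most $n-2$ minors can vanish. I expect the only slightly subtle step to be the one extracting $\alpha_1 c_j = 0$ from the assumed dependence; everything else is bookkeeping, and the full case $n=2$ reduces to the trivial observation that both minors in a $2 \times 2$ Laplace expansion are single nonzero entries.
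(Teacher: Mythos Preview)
Your proof is correct. The key step --- substituting $\vv_1 = \sum_{k \ge 2} c_k \vv_k$ into a nontrivial dependence among $\{\vv_i : i \ne j\}$ and reading off the coefficient of $\vv_j$ in the basis $\vv_2, \ldots, \vv_n$ --- does give $\alpha_1 c_j = 0$, and $\alpha_1 = 0$ would kill all the other $\alpha_i$, so indeed $c_j = 0$ for every $j \ge 2$ and $\vv_1 = 0$, contradicting the nonzero-entries hypothesis applied to the first column below row one.

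The paper takes a different and shorter route. Instead of analysing column dependences, it replaces the first row of $\bX$ by another row (the last) to obtain a singular matrix $\widetilde\bX$ whose first-row Laplace expansion involves the \emph{same} minors $\det\bX_{1,j}$; this yields the identity $\sum_{j=1}^n (-1)^{j+1} x_{n,j}\det\bX_{1,j} = 0$. If exactly one minor were nonzero, the corresponding term would survive (since every $x_{n,j}\ne 0$), contradicting the identity. So your argument uses the nonzero-entries hypothesis on a \emph{column} and reaches the contradiction through an explicit basis computation, while the paper uses it on a \emph{row} via a single determinant identity. The paper's version is quicker and directly suggests the weaker sufficient condition noted in its remark (that the span of the remaining rows contain a vector with no zero entries); your version is a more hands-on linear-algebra argument that avoids the auxiliary-matrix trick entirely.
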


We also remark that the assumption that $\bX$ has non-zero entries is stronger than necessary. It is sufficient to
require that the span of the rows or columns not including the one being expanded about contains a vector with no
zero entries.

\begin{proof} Without loss of generality, we can consider expansion about the top row. 
  Let $\bX = (x_{i, j})_{i,j = 1}^n \in \cM_n (\K)$, and similarly define
  \[
    \widetilde \bX  = \begin{bmatrix}
      x_{n,1} & x_{n,2} & \ldots & x_{n,n} \\
      x_{2,1}     & x_{2,2}     & \ldots & x_{2,n} \\
       \vdots     & \vdots      & \ddots & \vdots \\
      x_{n,1} & x_{n,2} & \ldots & x_{n,n} \\
    \end{bmatrix} \in \cM_n(\K)
  \]
  as the matrix obtained by replacing the first row of $\bX$ with the bottom row. Clearly $\widetilde \bX$ is
  singular, and hence the Laplace expansion about the first row yields
  \begin{equation} \label{eq:zero-cofactors-laplace}
    0 = \det \widetilde \bX = \sum_{j = 1}^n (- 1)^{j + 1} x_{n,j} \det\bX_{1,j},
  \end{equation} 
  where, as before, $\bX_{1, j}$ is the submatrix of $\bX$ obtained by removing the first row and $j$-th column
  of $\bX$, or equivalently $\widetilde \bX$.
  
  It is impossible $ \det\bX_{1,j}$ to be zero for all $j \in \{1, \ldots, n\}$, otherwise $\det \bX = 0$,
  contradicting the assumed non-singularity of $\bX$. It is likewise impossible for exactly $n - 1$ of the cofactors
  $\det \bX_j$ to be zero, otherwise, because in this case, since all the $x_{n,j}$ are non-zero, the right hand
  side of~\eqref{eq:zero-cofactors-laplace} is also  non-zero.

  Therefore, at most $n - 2$ of the cofactors may be zero.
\end{proof}

\section{Tighter bounds from the proof of Theorem~\ref{thm:poly general}}
\label{app: bound poly general}

By careful consideration of the bounds in~\eqref{eq:poly-first-bound} and~\eqref{eq:poly-second-bound} over distinct
cases based on the value of $t_1$, $t_2$, and $n$, determining the particular maximum in each instance, one can
prove a tigher bound on $\#\cP_n(\cA; f)$ than that of Theorem~\ref{thm:poly general}. We present this bound below,
without proof.

It is convenient to define 
\begin{equation} \label{eq:beta}
\beta(n) = \frac{3}{4} n^2 - \frac{1}{4} n.
\end{equation}

\begin{thm} \label{thm:bound poly general}
  Let $\K$ be a field of characteristic zero, and $\Gamma$ a rank $\rk $ subgroup of $\K^\ast$. For
  any monic polynomial $f = \sum_{k = 0}^n c_k T^k \in \K[T]$ of degree $n \geq 3$, and $\cA
  \subseteq \Gamma$ of finite cardinality $A$, we have
  \[
    \#\cP_n(\cA; f) \ll A^{\beta(n)} \cdot \begin{cases}
      A^{-\lambda(n)}, & \quad c_{n - 1} = c_{n - 2} = 0,  \\
      A^{-\mu(n)}, & \quad c_{n - 1} = 0 \ \text{and\/}\ c_{n - 2} \neq 0, \\ 
      A^{-\nu(n)}, & \quad c_{n - 1} \neq 0, \\
    \end{cases}
  \]
  where $\beta(n)$ is given by~\eqref{eq:beta} and furthermore 
  \begin{align*}
     & \lambda(n) =  
     \begin{cases}
        1/2, & \quad n = 5, \\
        1, & \quad n \equiv 0 \pmod 4, \\
        3/2, & \quad n \equiv 1 \pmod 4 \ \text{and\/}\   n \neq 5, \\
        1/2, & \quad n \equiv 2 \pmod 4, \\
        1, & \quad n \equiv 3 \pmod 4, 
      \end{cases}\\
    & \mu(n) = 
     \begin{cases}
      1, & \quad n \equiv 0, 3 \pmod 4, \\
      1/2, & \quad n \equiv 1, 2 \pmod 4,
    \end{cases}
  \end{align*}
  and 
  \[\nu(n) = 
   \begin{cases}
      1, & \quad n \equiv 0 \pmod 4, \\
      1/2, & \quad n \equiv 1 \pmod 4, \\
      3/2, & \quad n \equiv 2 \pmod 4, \\
      1, & \quad n \equiv 3 \pmod 4. 
    \end{cases}
  \]
\end{thm}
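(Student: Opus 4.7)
The plan is to carry out a more refined case analysis than in the conclusion of Theorem~\ref{thm:poly general}, by retaining the dependence of \eqref{eq:poly-first-bound} and \eqref{eq:poly-second-bound} on the pair $(t_1, t_2) = (\tr \bX, \tr \bX^2)$ and conditioning on which subcase applies once $f$ is fixed. Via~\eqref{eq: coeff12:second-coeff}, the three conditions on $(c_{n-1}, c_{n-2})$ in the statement translate to the conditions $t_1 = t_2 = 0$, $t_1 = 0$ and $t_2 \neq 0$, and $t_1 \neq 0$, respectively.

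In each of these three cases, I would extract the relevant subcases from \eqref{eq:poly-first-bound} (which bounds matrices satisfying \eqref{eq:trace-squared-condition}) and \eqref{eq:poly-second-bound} (which bounds their complement \eqref{eq:trace-squared-condition-negation}), then take the maximum. When $t_1 \neq 0$, the ``otherwise'' subcase of \eqref{eq:poly-first-bound} and the $t_1 \neq 0$ subcase of \eqref{eq:poly-second-bound} contribute, and the former dominates since $\fl{n(n-1)/4} \geq \fl{n(n-1)/4 - 1/2}$; when $t_1 = 0$ and $t_2 \neq 0$, the same ``otherwise'' subcase of \eqref{eq:poly-first-bound} competes with the $t_1 = 0$ subcase of \eqref{eq:poly-second-bound}; and when $t_1 = t_2 = 0$, the corresponding subcases differ from the previous in that $\fl{2n/5}$ from Lemma~\ref{lem:SystEq} replaces $\fl{(n-1)/2}$ in the first bound.

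The main technical obstacle is arithmetic bookkeeping with floor functions. Using $n(n-1) \equiv 0 \pmod 4$ for $n \equiv 0, 1 \pmod 4$ and $n(n-1) \equiv 2 \pmod 4$ for $n \equiv 2, 3 \pmod 4$, one obtains closed-form expressions for $\fl{n(n-1)/4}$ and $\fl{n(n-1)/4 - 1/2}$, and combining these with $\fl{(n-1)/2}$ and $\fl{n/2}$ reduces each exponent to a quadratic in $n$ whose constant term depends only on $n \bmod 4$. Subtracting $\beta(n) = (3n^2 - n)/4$ then reads off $\nu(n)$ and $\mu(n)$ directly from the residue of $n$ modulo $4$. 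The $\fl{2n/5}$ term arising in the case $t_1 = t_2 = 0$ is the only place the analysis does not fit into a mod-$4$ pattern: a direct comparison shows that $\fl{2n/5} + \fl{n(n-1)/4}$ exceeds $\fl{n/2} + \fl{n(n-1)/4 - 1/2}$ exactly at $n = 5$ and is equal or smaller for all other $n \geq 3$, explaining why $\lambda(5)$ deviates from the mod-$4$ pattern obtained for the other residues, and yielding the stated piecewise form of $\lambda(n)$.
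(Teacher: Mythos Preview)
Your proposal is correct and is exactly the route the paper indicates: the paper presents Theorem~\ref{thm:bound poly general} ``without proof'', saying only that it follows ``by careful consideration of the bounds in~\eqref{eq:poly-first-bound} and~\eqref{eq:poly-second-bound} over distinct cases based on the value of $t_1$, $t_2$, and $n$''. Your translation of the conditions on $(c_{n-1},c_{n-2})$ into conditions on $(t_1,t_2)$ via~\eqref{eq: coeff12:second-coeff}, your identification of which subcase of each bound is relevant, and your observation that $E_3' = \fl{2n/5} + \fl{n(n-1)/4}$ exceeds $E_2' = \fl{n/2} + \fl{n(n-1)/4 - 1/2}$ only at $n=5$ (so that $\lambda(n) = \beta(n) - E_2'$ follows the mod-$4$ pattern for all $n \neq 5$) all check out.
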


Furthermore, in the case when $\K = \R$ we can further improve the bound in a few particular cases by noticing that
when $t_2 = 0$, the left hand side of~\eqref{eq:poly:general:trace-squared} must be non-zero, allowing the use of
Lemma~\ref{lem:LinEq inhomog} rather than possibly Lemma~\ref{lem:LinEq homog}. This leads to the following result.

\begin{thm} \label{thm:bound poly real}
  Suppose $\Gamma$ is a rank $\rk$ subgroup of $\R^\ast$ and $n \geq 3$ with $n \equiv 0, 1 \pmod 4$. For any monic
  polynomial $f = \sum_{k = 0}^n c_k T^k \in \R[T]$ of degree $n$ and $\cA \subseteq \Gamma$ of finite cardinality
  $A$, we have
  \begin{itemize}
    \item if $c_{n - 1} \neq 0$ and $2 c_{n - 2} = c_{n - 1}$,
    \[
      \# \cP_n (\cA; f) \ll  A^{\beta(n)} \cdot \begin{cases}
        A^{ - 2}, & \quad n \equiv 0 \pmod 4, \\
        A^{ - 3 / 2}, & \quad n \equiv 1 \pmod 4, 
      \end{cases}
    \]
    where  $\beta(n)$ is given by~\eqref{eq:beta};
    \item if $c_{n - 1} = c_{n - 2} = 0$ and $n = 5$,
    \[
      \# \cP_n (\cA; f) \ll A^{\beta(5) - 3/2}.
    \]
  \end{itemize}
\end{thm}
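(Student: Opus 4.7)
The plan is to modify the proof of Theorem~\ref{thm:poly general} by exploiting a positivity argument available in the real setting.

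First I would translate the hypothesis on the coefficients $c_{n-1}, c_{n-2}$ of $f$ into the traces $t_1 = \tr\bX$ and $t_2 = \tr \bX^2$. From~\eqref{eq: coeff12:second-coeff} one has $t_1 = -c_{n-1}$ and $t_2 = c_{n-1}^2 - 2c_{n-2}$. The first assumption of the theorem, $c_{n-1} \neq 0$ together with $2 c_{n-2} = c_{n-1}^2$, then amounts to $t_1 \neq 0$ and $t_2 = 0$; the second assumption, $c_{n-1} = c_{n-2} = 0$ (with $n = 5$), amounts to $t_1 = t_2 = 0$. In both situations $t_2 = 0$.

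The key observation is that since $\K = \R$ and the entries of $\bX$ lie in $\Gamma \subseteq \R^{\ast}$, all diagonal entries $x_{i,i}$ are nonzero reals, so $\sum_{i=1}^n x_{i,i}^2 > 0$. Hence the equality $t_2 = \sum_{i=1}^n x_{i,i}^2$ of~\eqref{eq:trace-squared-condition}, which defines the degenerate case in the proof of Theorem~\ref{thm:poly general}, is impossible when $t_2 = 0$. Thus the contribution~\eqref{eq:poly-first-bound} drops out entirely, and only the non-degenerate contribution~\eqref{eq:poly-second-bound} survives. In that case the left-hand side of~\eqref{eq:poly:general:trace-squared} is a nonzero constant, allowing us to count the entries $x_{i,j}$ with $j < i$ via Lemma~\ref{lem:LinEq inhomog}, saving a factor of roughly $A^{1/2}$ over Lemma~\ref{lem:LinEq homog} used implicitly in Theorem~\ref{thm:bound poly general}.

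I would then assemble the bound directly: the upper triangular part contributes $A^{n(n-1)/2}$; the lower triangular part contributes $A^{\fl{n(n-1)/4 - 1/2}}$ by Lemma~\ref{lem:LinEq inhomog}; and the diagonal contributes $A^{\fl{(n-1)/2}}$ (by Lemma~\ref{lem:LinEq inhomog} applied to~\eqref{eq:poly:general:trace}) in the case $t_1 \neq 0$, and $A^{\fl{n/2}}$ (by Lemma~\ref{lem:LinEq homog}) in the case $t_1 = 0$. Expressing the totals in terms of $\beta(n) = 3n^2/4 - n/4$ via routine arithmetic then yields the claimed exponents $\beta(n) - 2$ and $\beta(n) - 3/2$ in the first branch, and $\beta(5) - 3/2$ in the second branch.

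The main technical point, and the place where a little care is needed, is verifying that the restriction to $n \equiv 0, 1 \pmod{4}$ is essential: the full saving of Lemma~\ref{lem:LinEq inhomog} over Lemma~\ref{lem:LinEq homog} materialises only when $n(n-1)/4 \in \Z$, i.e.\ for $n \equiv 0, 1 \pmod{4}$. For $n \equiv 2, 3 \pmod{4}$ the fractional part of $n(n-1)/4$ equals $1/2$, so $\fl{n(n-1)/4 - 1/2} = \fl{n(n-1)/4}$ and the positivity observation produces no improvement over Theorem~\ref{thm:bound poly general}. Beyond this floor-function bookkeeping, no new conceptual ingredient is required.
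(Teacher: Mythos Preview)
Your proposal is correct and follows exactly the approach indicated in the paper: exploit that over $\R$ the diagonal sum of squares is strictly positive, so when $t_2=0$ the case~\eqref{eq:trace-squared-condition} is vacuous and only the bound~\eqref{eq:poly-second-bound} contributes, with the floor-function arithmetic for $n\equiv 0,1\pmod 4$ giving the stated savings. One small remark: you have silently (and correctly) read the hypothesis as $2c_{n-2}=c_{n-1}^2$ rather than the stated $2c_{n-2}=c_{n-1}$, since it is the former that is equivalent to $t_2=0$; this is evidently a typo in the theorem statement.
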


Note that for $n \equiv 2,3  \pmod 4$, the fact that $\Gamma \subseteq\R^\ast$ does not offer any 
advantage. 






\end{document}